\documentclass[11pt,a4paper]{article}

\usepackage{amsmath,amssymb,amsthm,mathtools}
\usepackage{geometry}
\usepackage[colorlinks=true,linkcolor=black,citecolor=black,urlcolor=black]{hyperref}
\usepackage{enumitem}
\usepackage[numbers,sort&compress]{natbib}
\newtheorem{theorem}{Theorem}[section]
\newtheorem{proposition}[theorem]{Proposition}
\newtheorem{lemma}[theorem]{Lemma}
\newtheorem{corollary}[theorem]{Corollary}
\theoremstyle{definition}
\newtheorem{definition}[theorem]{Definition}

\newtheorem{principle}[theorem]{Principle}
\theoremstyle{remark}
\newtheorem{remark}[theorem]{Remark}

\newcommand{\R}{\mathbb{R}}
\newcommand{\norm}[1]{\lVert #1 \rVert}
\newcommand{\diag}{\mathrm{diag}}
\newcommand{\Tr}{\mathrm{Tr}}
\renewcommand{\div}{\mathrm{div}}
\DeclareMathOperator{\sech}{sech}

\newenvironment{keywords}{\smallskip\noindent\textbf{Keywords.}\enspace}{\par}

\title{Activation Saturation and Floquet Spectrum Collapse\\
       in Neural ODEs}
\author{Nikolaos M.\ Matzakos\thanks{%
  School of Pedagogical \&\ Technological
  Education (ASPETE), 141\,21 Athens, Greece.
  Currently visiting the Department of Mathematics,
  Chair of Dynamics, Control, Machine Learning and Numerics,
  Friedrich-Alexander-Universit\"at Erlangen--N\"urnberg (FAU),
  91058 Erlangen, Germany.
  E-mail: \texttt{nikmatz@aspete.gr}, \texttt{nikolaos.matzakos@fau.de}.
  ORCID: \texttt{0000-0001-8647-6082}.}}
\date{}

\begin{document}
\maketitle

\begin{abstract}
We prove that activation saturation imposes a structural
dynamical limitation on autonomous Neural ODEs
$\dot{h}=f_\theta(h)$ with saturating activations ($\tanh$,
sigmoid, etc.):
if $q$ hidden layers of the MLP $f_\theta$ satisfy
$|\sigma'|\le\delta$ on a region~$U$,
the input Jacobian is attenuated as
$\norm{Df_\theta(x)}\le C(U)$
(for activations with $\sup_{x}|\sigma'(x)|\le 1$,
e.g.\ $\tanh$ and sigmoid, this reduces to $C_W\delta^q$),
forcing every Floquet (Lyapunov) exponent
along any $T$-periodic orbit $\gamma\subset U$
into the interval $[-C(U),\;C(U)]$.
This is a collapse of the Floquet spectrum:
as saturation deepens ($\delta\to 0$),
all exponents are driven to zero,
limiting both strong contraction and chaotic sensitivity.
The obstruction is structural --- it constrains
the learned vector field at inference time,
independent of training quality.
As a secondary contribution, for activations with $\sigma'>0$,
a saturation-weighted spectral factorisation yields
a refined bound $\widetilde{C}(U)\le C(U)$
whose improvement is amplified exponentially in~$T$
at the flow level.
All results are numerically illustrated on the Stuart--Landau oscillator;
the bounds provide a theoretical explanation for the
empirically observed failure of $\tanh$-NODEs
on the Morris--Lecar neuron model.
\end{abstract}

\begin{keywords}
Neural ODEs; activation saturation; Floquet spectrum collapse;
Jacobian attenuation; Liouville identity; monodromy matrix;
Floquet multipliers; limit cycles; expressivity limitations
\end{keywords}

\noindent\textbf{MSC 2020:}
37C27, 68T07, 34A34, 37C75, 47A30.

\section{Introduction}
\label{sec:intro}

\paragraph{Background.}
Neural ODEs~\cite{Chen2018NeuralODE} model continuous-time dynamics
via $\dot{h}=f_\theta(h)$, $h(t_0)=h_0\in\R^d$,
where $f_\theta$ is an MLP; extensions to
input-driven settings include Neural CDEs~\cite{Kidger2020NeuralCDE}.
A fundamental question is: \emph{what dynamical behaviours can
such a system reproduce?}

\paragraph{The problem.}
For saturating activations ($|\sigma'(x)|\to 0$ as $|x|\to\infty$),
the MLP Jacobian factors as
$Df_\theta(x) = W_L\,D_{L-1}(x)\,W_{L-1}\cdots D_1(x)\,W_1$
with $D_k = \diag(\sigma'(a_{k,1}),\ldots)$
(Proposition~\ref{prop:jacobian}),
so saturation in $q$ layers gives
$\norm{Df_\theta(x)}\le C_W\delta^q\to 0$.
Unlike the vanishing-gradient problem~\cite{Bengio1994,Glorot2010},
which affects $\nabla_\theta\mathcal{L}$ during training
and can be mitigated by batch normalisation~\cite{IoffeS2015},
the attenuation of $Df_\theta(x)$ is a property of the
\emph{learned vector field at inference time}.
This is a \emph{structural limitation on the learned dynamics},
governed by the saturation depth~$q$ and the threshold~$\delta$.

\paragraph{Central thesis.}
The main message of this paper is:

\begin{principle}[Floquet spectrum collapse, informal]
\label{prin:sap}
Activation saturation attenuates the Jacobian of the learned
vector field.
Via the Liouville--Abel--Jacobi identity, this forces all Floquet
exponents of any periodic orbit within the saturated region toward
zero, collapsing the dynamical spectrum.
The achievable contraction and expansion rates are bounded
by a quantity that vanishes as $\delta\to 0$.
This is made rigorous in
Theorems~\textup{\ref{thm:main}}, \textup{\ref{thm:liouville}},
and~\textup{\ref{thm:individual}}.
\end{principle}

\paragraph{Motivating observation.}
The starting point for the present work is a concrete empirical puzzle:
in companion experiments with the Morris--Lecar neuron
model~\cite{MatzakosSfyrakis2026}, a $\tanh$-NODE systematically
failed to reproduce limit-cycle oscillations across all three
bifurcation regimes (Hopf, saddle-node on limit cycle, and
homoclinic), while replacing $\tanh$ with SiLU resolved the
failure in every case.
This activation-dependent dichotomy suggested that saturation
imposes a structural limitation on the vector field's
Jacobian --- one that persists at inference time, independent
of training quality.
The theorems below make this hypothesis precise.

\paragraph{Main contributions.}
We present our results in a clear hierarchy:

\begin{enumerate}[label=\textup{(\arabic*)},leftmargin=*]
\item \textit{Mechanism: Jacobian attenuation}
(Theorem~\ref{thm:main}).
If $q$ hidden layers are $\delta$-saturated on a convex
region~$U$, then $\norm{Df_\theta(x)}\le C(U)=C_W\delta^q
\prod_{k\notin\mathcal{K}}M_k(U)$ for all $x\in U$.
This is the underlying mechanism that drives all subsequent
dynamical consequences.

\item \textit{Main result: Floquet spectrum collapse}
(Theorems~\ref{thm:liouville} and~\ref{thm:individual}).
The Liouville--Abel--Jacobi identity converts
Jacobian attenuation into a dynamical obstruction:
every Floquet multiplier of a $T$-periodic orbit
$\gamma\subset U$ satisfies
$e^{-C(U)T}\le|\mu_i|\le e^{C(U)T}$,
and every Floquet (Lyapunov) exponent satisfies
$|\lambda_i|\le C(U)$.
Since $C(U)\to 0$ as $\delta\to 0$,
the entire Floquet spectrum collapses to zero.
This is a quantitative expressivity limitation:
it does not exclude periodic orbits, but bounds
the achievable contraction and expansion rates.

\item \textit{Refinement: saturation-weighted bounds}
(Section~\ref{sec:refined}).
For activations with $\sigma'>0$, redistributing
$D_k^{1/2}$ to adjacent weight matrices yields a
refined bound $\widetilde{C}(U)\le C(U)$, sharp
(Proposition~\ref{prop:sharp}), with improvement
amplified exponentially in~$T$ at the flow level
(Corollary~\ref{cor:exp_amp}).
\end{enumerate}

\paragraph{Related work.}
{\tolerance=1000\emergencystretch=2em
Neural ODE expressivity:
\cite{Dupont2019AugmentedNODE,Massaroli2020NODE};
activation effects on training: \cite{GaoNeuralODE2025}
(complementary---we treat inference).
The trace $\Tr(Df_\theta)$ drives density estimation
in FFJORD~\cite{Grathwohl2019FFJORD};
our bounds show saturation forces this trace to zero.
Finlay et al.~\cite{Finlay2020HowToTrain}
regularise $\norm{Df_\theta}$ during training;
saturation imposes analogous attenuation structurally.
Li et al.~\cite{LiLiuLiveraniZuazua2026} prove universal
approximation for semi-autonomous Neural~ODEs at rate
$\mathcal{O}(1/\sqrt{P})$ with ReLU, confirming that our
obstruction is specific to the combination of autonomy
and saturation.\par}

\paragraph{Classical dynamical systems theory.}
The theoretical tools employed in this paper ---%
the Liouville--Abel--Jacobi identity, Floquet multipliers,
and Gr\"onwall estimates for the variational equation ---%
belong to the classical qualitative theory of ordinary
differential equations, originating with
Floquet~\cite{Floquet1883} and developed in
Coddington--Levinson~\cite{CoddingtonLevinson1955},
Hartman~\cite{Hartman1964}, Chicone~\cite{Chicone2006},
and Guckenheimer--Holmes~\cite{GuckenheimerHolmes1983}.
What distinguishes the present work from this literature
is not the analytical machinery, which is entirely
standard, but the application context: rather than
analysing a given ODE, we analyse the entire
\emph{parametrised class} of vector fields learnable by
a neural network under saturation, and establish that
a single architectural property --- activation saturation
--- imposes a uniform Floquet spectral constraint on all
orbits of all networks in this class.

\paragraph{Expressivity of neural networks.}
Universal approximation theorems~\cite{Cybenko1989,Hornik1990}
establish that sufficiently wide or deep networks can
approximate any continuous function to arbitrary accuracy
on a compact set.
Capacity bounds~\cite{Barron1993} quantify
the approximation error as a function of network width,
while depth separation results~\cite{Montufar2014}
identify function classes for which depth confers an
exponential representational advantage.
However, all of these results concern
\emph{static function approximation}; they make no
claim about the dynamical properties of the flow
generated by an autonomous ODE with a learned
right-hand side.
The present paper introduces a complementary notion,
which we term \emph{dynamical expressivity}: whether
the flow of $\dot{h}=f_\theta(h)$ can reproduce
specified stability properties of periodic orbits.
This is a fundamentally different question, and the
obstruction we identify has no analogue in classical
approximation theory --- a network that universally
approximates the vector field pointwise may still fail
to reproduce the Floquet structure of its orbits when
activation saturation is present.

\paragraph{Physics-informed and structure-preserving learning.}
A complementary line of research embeds structural
constraints directly into learned models.
Physics-informed neural networks
(PINNs)~\cite{Raissi2019} penalise residuals of
governing equations during training;
Matzakos and Sfyrakis~\cite{MatzakosSfyrakis2026}
demonstrate that such structural bias substantially
improves accuracy on the Morris--Lecar model across
bifurcation regimes.
The approach of the present paper is distinct: rather
than incorporating known equations into the loss, we
characterise the structural limitations that activation
saturation imposes on the learned model class,
independent of any specific training objective.
This characterisation naturally suggests
embedding dynamical structure as an inductive bias
into the training procedure, a direction we identify
as an open problem for future work.

\paragraph{Proof strategy.}
Theorem~\ref{thm:main}: submultiplicativity applied to
$Df_\theta=W_L D_{L-1}\cdots D_1 W_1$.
Theorems~\ref{thm:liouville}--\ref{thm:individual}:
the Liouville--Abel--Jacobi identity
$\ln\det M_\gamma=\int_0^T\Tr(Df_\theta(\gamma(t)))\,dt$
combined with $|\Tr(A)|\le d\norm{A}$, plus
a Gr\"onwall argument for individual multipliers.
Section~\ref{sec:refined}: replace $D_k$ by $D_k^{1/2}D_k^{1/2}$
and redistribute to adjacent weight matrices before applying
submultiplicativity.

\paragraph{Organisation.}
\S\ref{sec:setting}: notation.
\S\ref{sec:attenuation}: Jacobian bound (mechanism).
\S\ref{sec:floquet}: Floquet--Liouville obstruction (main result).
\S\ref{sec:numerics}: numerical verification.
\S\ref{sec:refined}: refined bounds (secondary contribution).
\S\ref{sec:discussion}: discussion.
Appendix~\ref{app:auxiliary}: auxiliary results.
Appendix~\ref{app:numerics}: additional numerical experiments.

\section{Setting and Notation}
\label{sec:setting}

\subsection*{Notation.}
We write $\norm{A}$ for the \emph{operator} (spectral) norm of a matrix
$A\in\R^{m\times n}$, i.e.\ $\norm{A}:=\sup\{\norm{Av}_2:\norm{v}_2=1\}$.
For a diagonal matrix $D=\diag(d_1,\ldots,d_n)$,
$\norm{D}=\max_i|d_i|$.
All norms on $\R^d$ are the Euclidean norm.

\subsection*{Key distinction.}
Throughout, $\theta\in\R^p$ denotes the fixed weight vector after training.
We study the \emph{input Jacobian}
\[
  Df_\theta(x) := \frac{\partial f_\theta}{\partial x}\in\R^{d\times d},
  \qquad x\in\R^d,\;\theta\text{ fixed},
\]
which measures how the vector field responds to changes in the \emph{state}.
This is distinct from the \emph{parameter gradient}
$\nabla_\theta\mathcal{L}=\partial\mathcal{L}/\partial\theta$,
which measures how the loss responds to changes in the \emph{weights}
(the training-time vanishing gradient phenomenon
studied by~\cite{Bengio1994} concerns the latter, not the former).
For a vector field $f_\theta:\R^d\to\R^d$, the divergence satisfies
\begin{equation}
  \div(f_\theta)(x) = \sum_{i=1}^d\frac{\partial(f_\theta)_i}{\partial x_i}(x)
  = \Tr\!\bigl(Df_\theta(x)\bigr).
  \label{eq:div_trace}
\end{equation}

\subsection*{Neural ODE.}
\begin{equation}
  \dot{h}(t) = f_\theta(h(t)),\qquad h(t_0)=h_0\in\R^d,
  \label{eq:node}
\end{equation}
where $f_\theta:\R^d\to\R^d$ is the $L$-layer fully connected network
\begin{equation}
  f_\theta(x)
  = W_L\,\sigma\!\bigl(W_{L-1}\,\sigma(\cdots\sigma(W_1 x+b_1)\cdots)
    +b_{L-1}\bigr)+b_L,
  \label{eq:mlp}
\end{equation}
with $W_\ell\in\R^{d_\ell\times d_{\ell-1}}$, $b_\ell\in\R^{d_\ell}$,
$d_0=d_L=d$, and $\sigma\in C^1(\R)$ applied componentwise.
We write
\[
  z_0(x):=x,\qquad
  a_k(x):=W_k z_{k-1}(x)+b_k,\qquad
  z_k(x):=\sigma(a_k(x)),
  \quad k=1,\ldots,L-1,
\]
for the pre-activation and post-activation vectors at layer $k$,
and $f_\theta(x)=W_L z_{L-1}(x)+b_L$.
We denote the flow $\Phi^\theta_t$, so $h(t)=\Phi^\theta_t(h_0)$.

\paragraph{Notational convention.}
We use $x$ as a generic argument of the MLP $f_\theta$ when stating
static bounds (Jacobian factorisation, operator-norm estimates), and
$h(t)$ or $\gamma(t)$ when the argument is a point on a trajectory
or periodic orbit.
Since the ODE $\dot{h}=f_\theta(h)$ evaluates the same map $f_\theta$,
any bound proved ``for all $x\in U$'' applies in particular to
$h(t)\in U$ along the flow.

\subsection*{MLP Jacobian.}

\begin{proposition}[Jacobian factorization; standard, see e.g.\ {\cite{Goodfellow2016}}]
\label{prop:jacobian}
For every $x\in\R^d$,
\begin{equation}
  Df_\theta(x)
  = W_L\,D_{L-1}(x)\,W_{L-1}\cdots D_1(x)\,W_1,
  \label{eq:jacobian}
\end{equation}
where
$D_k(x):=\diag\!\bigl(\sigma'(a_{k,1}(x)),\ldots,\sigma'(a_{k,d_k}(x))\bigr)
\in\R^{d_k\times d_k}$.
\end{proposition}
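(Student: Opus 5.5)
The proof is a direct application of the multivariate chain rule to the layer recursion $z_0(x)=x$, $a_k=W_kz_{k-1}+b_k$, $z_k=\sigma(a_k)$, $f_\theta=W_Lz_{L-1}+b_L$. The claim to establish by induction on $k=0,1,\ldots,L-1$ is
\[
  Dz_k(x)=D_k(x)\,W_k\,D_{k-1}(x)\,W_{k-1}\cdots D_1(x)\,W_1 ,
\]
with the empty product for $k=0$ read as $Dz_0(x)=I_d$. The identity \eqref{eq:jacobian} then follows from one further affine step.

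The induction has two ingredients.

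\begin{enumerate}[label=\textup{(\roman*)},leftmargin=*]
\item \emph{Affine step.} Since $a_k=W_kz_{k-1}+b_k$ and $b_k$ is constant, $Da_k(x)=W_k\,Dz_{k-1}(x)$.
\item \emph{Componentwise activation.} Since $\sigma\in C^1(\R)$ acts coordinatewise, the map $y\mapsto\sigma(y)$ on $\R^{d_k}$ is $C^1$. Its Jacobian at $y$ is $\diag(\sigma'(y_1),\ldots,\sigma'(y_{d_k}))$, because $\partial\sigma(y_i)/\partial y_j=\delta_{ij}\sigma'(y_i)$. Evaluated at $y=a_k(x)$, this Jacobian is exactly $D_k(x)$.
\end{enumerate}

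By the chain rule, $Dz_k(x)=D_k(x)\,W_k\,Dz_{k-1}(x)$, and substituting the inductive hypothesis gives the claimed product. The final layer is affine, so $Df_\theta(x)=W_L\,Dz_{L-1}(x)$, which is \eqref{eq:jacobian}.

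The only point needing care is justifying the chain rule, that is, that every intermediate map is differentiable. This holds because each map is a composition of affine maps, which are smooth, with the componentwise $C^1$ map $\sigma$. Hence $f_\theta\in C^1$ and the chain rule applies at every $x\in\R^d$. One also checks the ordering of the factors: each new layer multiplies on the left, consistent with $d_k\times d_{k-1}$ dimensions. The rest is bookkeeping.
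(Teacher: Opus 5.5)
Your proof is correct and follows the same route as the paper, which simply cites a direct induction on the chain rule. You induct on the layer recursion, with the componentwise activation contributing the diagonal factor $D_k(x)$ and each affine step contributing $W_k$ on the left.
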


The proof is a direct induction on the chain rule;
see e.g.\ \cite{Goodfellow2016}.

Since $D_k(x)$ is diagonal, its operator norm satisfies
\begin{equation}
  \norm{D_k(x)} = \max_{1\le i\le d_k}\lvert\sigma'(a_{k,i}(x))\rvert.
  \label{eq:Dk_norm}
\end{equation}
Applying submultiplicativity of the operator norm to~\eqref{eq:jacobian}:
\begin{equation}
  \norm{Df_\theta(x)}
  \le \underbrace{\prod_{\ell=1}^L\norm{W_\ell}}_{=:\,C_W}
  \cdot\prod_{k=1}^{L-1}\max_{i}\lvert\sigma'(a_{k,i}(x))\rvert.
  \label{eq:jac_bound}
\end{equation}

\subsection*{Saturation.}

\begin{definition}
\label{def:saturation}
Let $U\subset\R^d$ be convex and $\mathcal{K}\subseteq\{1,\ldots,L-1\}$
with $|\mathcal{K}|=q$.
We call $q$ the \emph{saturation depth}: the number of hidden layers
whose activation derivatives are uniformly small on $U$.
\emph{$q$ layers are $\delta$-saturated on $U$} if
\begin{equation}
  \max_{1\le i\le d_k}\lvert\sigma'(a_{k,i}(x))\rvert\le\delta,
  \qquad\forall\,x\in U,\;\forall\,k\in\mathcal{K}.
  \label{eq:saturation}
\end{equation}
For $k\notin\mathcal{K}$, define
\begin{equation}
  M_k(U) := \sup_{x\in U}\,\max_{1\le i\le d_k}\lvert\sigma'(a_{k,i}(x))\rvert
  \label{eq:Mk}
\end{equation}
and set
\begin{equation}
  C(U) := C_W\cdot\delta^q\cdot\prod_{k\notin\mathcal{K}}M_k(U).
  \label{eq:CU}
\end{equation}
\end{definition}

\begin{remark}[Choice of $U$]
\label{rem:U}
For a periodic orbit $\gamma$, take $U=\mathrm{conv}(\gamma)$.
For a bounded trajectory $h(\cdot)\subset B$, any convex
$U\supseteq B$ suffices.
\end{remark}

\paragraph{Standing assumption.}
All bounds in Sections~\ref{sec:attenuation}--\ref{sec:refined}
apply to trajectories (or periodic orbits) that remain
in~$U$ for the entire time interval under consideration.
For a $T$-periodic orbit $\gamma$ the requirement is simply
$\gamma([0,T])\subset U$; see Remark~\ref{rem:U}.

\begin{remark}[When does saturation occur?]
\label{rem:saturation_practice}
For $\sigma=\tanh$, layer $k$ is $\delta$-saturated on $U$ whenever
$\min_{x\in U,\,i}|a_{k,i}(x)|\ge r$ with $\delta=\sech^2(r)$.
This holds when oscillation amplitude satisfies
$\sigma_{\min}(W_1)A\gg r$, or when a pre-activation scale
$s\gg 1$ is applied (Section~\ref{sec:numerics}),
provided the orbit avoids zero pre-activations.
\end{remark}

\section{Jacobian Attenuation under Activation Saturation}
\label{sec:attenuation}

This section establishes the mechanism underlying all subsequent
dynamical results: saturation of activation derivatives
attenuates the input Jacobian $Df_\theta(x)$.

\begin{lemma}[Gr\"{o}nwall \cite{Hartman1964,Chicone2006}]
\label{lem:gronwall}
Let $y:[t_0,T]\to[0,\infty)$ be absolutely continuous and
$\alpha:[t_0,T]\to[0,\infty)$ locally integrable.
If $y'(t)\le\alpha(t)y(t)$ a.e., then
\begin{equation}
  y(t)\le y(t_0)\exp\!\left(\int_{t_0}^t\alpha(\tau)\,d\tau\right),
  \qquad t\in[t_0,T].
  \label{eq:gronwall}
\end{equation}
\end{lemma}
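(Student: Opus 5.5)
The plan is to prove the statement by an integrating-factor argument, so that the only analytic input is the fundamental theorem of calculus for absolutely continuous functions. Set $A(t):=\int_{t_0}^t\alpha(\tau)\,d\tau$. Since $\alpha\ge 0$ is locally integrable on $[t_0,T]$, $A$ is absolutely continuous, nondecreasing, and satisfies $A'(t)=\alpha(t)$ for almost every $t$ (Lebesgue differentiation). I will then introduce the auxiliary function
\[
  z(t):=e^{-A(t)}\,y(t),\qquad t\in[t_0,T],
\]
and aim to show that $z$ is nonincreasing. Evaluating $z(t)\le z(t_0)=y(t_0)$ and multiplying by $e^{A(t)}$ then gives exactly~\eqref{eq:gronwall}.

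The key steps, in order, are as follows.

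\textbf{Step 1: absolute continuity of $z$.} Here $e^{-A}$ is absolutely continuous as the composition of the Lipschitz map $s\mapsto e^{-s}$, restricted to the bounded range of $A$, with the absolutely continuous function $A$. A product of two absolutely continuous functions on a compact interval is again absolutely continuous, so $z$ is absolutely continuous.

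\textbf{Step 2: the sign of $z'$.} The product rule holds almost everywhere for such functions, giving
\[
  z'(t)=e^{-A(t)}\bigl(y'(t)-\alpha(t)y(t)\bigr)\le 0
\]
for almost every $t$. This uses the hypothesis $y'\le\alpha y$ almost everywhere and the positivity of the exponential.

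\textbf{Step 3: monotonicity.} Absolute continuity gives $z(t)-z(t_0)=\int_{t_0}^t z'(\tau)\,d\tau\le 0$. Note that the nonnegativity of $y$ is not needed in this route; it only matters for the reader's intended use, namely norms of solutions of the variational equation.

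I expect the main obstacle to be Step 1 together with the almost-everywhere product rule. These are the points where the weak regularity assumptions (absolute continuity only, $\alpha$ merely integrable) must be handled carefully rather than by the naive $C^1$ computation. I will handle them by citing the standard facts that absolutely continuous functions form an algebra on compact intervals and are closed under composition with Lipschitz outer functions. I will also check integrability of $z'$ directly: since $y$ is bounded, $\alpha y$ is integrable, so the integrand in Step 3 is legitimate.

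If one prefers to avoid these measure-theoretic details, there is an alternative. Since this lemma is classical (\cite{Hartman1964,Chicone2006}) and the applications in the paper involve continuous $\alpha$ (of the form $\norm{Df_\theta(\gamma(t))}$, or the constant $C(U)$) and $C^1$ functions $y$, one may simply cite it. In that setting the same three steps go through with classical derivatives.
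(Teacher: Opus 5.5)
Your integrating-factor proof is correct and complete under exactly the stated hypotheses. $A(t)=\int_{t_0}^t\alpha$ is absolutely continuous with $A'=\alpha$ a.e. Then $z=e^{-A}y$ is absolutely continuous as a product of absolutely continuous functions on a compact interval, and $z'=e^{-A}(y'-\alpha y)\le 0$ a.e. The fundamental theorem of calculus for absolutely continuous functions then gives $z(t)\le z(t_0)=y(t_0)$.

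The paper gives no proof of this lemma; it simply defers to Hartman and Chicone. Your closing remark that one may just cite the result is exactly the paper's choice. What your direct argument adds is a self-contained proof valid for absolutely continuous $y$ and merely integrable $\alpha$. The usual textbook alternative packages Gr\"onwall for continuous data via the integral inequality $y(t)\le y(t_0)+\int_{t_0}^t\alpha y$. Your argument also makes visible that the sign hypotheses $y\ge 0$, $\alpha\ge 0$ play no role in the differential form.

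One small redundancy: your separate check that $z'$ is integrable is unnecessary. $z'\in L^1$ follows automatically from the absolute continuity of $z$.
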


\begin{lemma}[Trajectory sensitivity; standard, see e.g.\ {\cite{Hartman1964}}]
\label{lem:mvt}
Let $h,\tilde{h}$ solve~\eqref{eq:node} with both trajectories in a
convex set $U$ on $[t_0,t]$.  Then
\begin{equation}
  \norm{h(t)-\tilde{h}(t)}
  \le\exp\!\left(\int_{t_0}^t\sup_{\xi\in U}\norm{Df_\theta(\xi)}\,d\tau\right)
  \norm{h_0-\tilde{h}_0}.
  \label{eq:traj_sens}
\end{equation}
\end{lemma}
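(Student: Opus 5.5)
The plan is to reduce the estimate to Gr\"onwall's inequality (Lemma~\ref{lem:gronwall}) applied to the scalar function $y(\tau):=\norm{h(\tau)-\tilde h(\tau)}$. The difference $e(\tau):=h(\tau)-\tilde h(\tau)$ satisfies
\[
\dot e(\tau)=f_\theta(h(\tau))-f_\theta(\tilde h(\tau)).
\]
Since both $h(\tau)$ and $\tilde h(\tau)$ lie in the convex set $U$, the whole segment $\{\tilde h(\tau)+s\,e(\tau):s\in[0,1]\}$ lies in $U$. Because $\sigma\in C^1$, the MLP $f_\theta$ is $C^1$, so the integral form of the mean value theorem applies:
\[
f_\theta(h)-f_\theta(\tilde h)=\Bigl(\int_0^1 Df_\theta\bigl(\tilde h+s(h-\tilde h)\bigr)\,ds\Bigr)(h-\tilde h).
\]
Taking operator norms gives
\[
\norm{\dot e(\tau)}\le L_U\,\norm{e(\tau)},\qquad L_U:=\sup_{\xi\in U}\norm{Df_\theta(\xi)}.
\]
If $L_U=\infty$, the claimed bound is trivial, so we may assume $L_U<\infty$.

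The one technical point is that $y=\norm{e}$ need not be differentiable where $e=0$, so I would not differentiate $y$ directly. There are two ways around this.

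\emph{First option.} Work with $y^2=\langle e,e\rangle$, which is $C^1$. It satisfies
\[
(y^2)'=2\langle e,\dot e\rangle\le 2L_U\,y^2.
\]
Lemma~\ref{lem:gronwall} applied to $y^2$ with $\alpha\equiv 2L_U$ gives $y(t)^2\le y(t_0)^2e^{2L_U(t-t_0)}$. Taking square roots gives~\eqref{eq:traj_sens}, because the integrand $\sup_{\xi\in U}\norm{Df_\theta(\xi)}$ is constant in $\tau$ and so its integral is $L_U(t-t_0)$.

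\emph{Second option.} Note that $y$ is Lipschitz, hence absolutely continuous. At points where $e\neq 0$ we have $y'=\langle e,\dot e\rangle/y\le L_U y$. Where $e$ vanishes, uniqueness of solutions (f is locally Lipschitz, being $C^1$) forces $e\equiv 0$ from that time on, and the inequality is trivial there.

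I would present the first option, since it avoids the case distinction.

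I expect the main obstacle to be only this regularity and convexity bookkeeping. One must check that the segment between the two trajectories stays in $U$, which is why convexity of $U$ is assumed. One must also check the integrability hypotheses of Lemma~\ref{lem:gronwall}: $\alpha$ is a constant, and $y^2$ is $C^1$ because both trajectories are $C^1$.
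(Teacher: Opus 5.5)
Your proof is correct. The paper gives no proof of Lemma~\ref{lem:mvt}; it cites it as standard, and your argument is the standard one it relies on: the segment/fundamental-theorem-of-calculus estimate on the convex set $U$ (the same device as in Corollary~\ref{cor:lipschitz}), followed by Gr\"onwall's Lemma~\ref{lem:gronwall}. Applying Gr\"onwall to $y^2=\norm{e}^2$ rather than to $\norm{e}$ avoids the non-differentiability of the norm at $e=0$, a point the paper itself passes over when it differentiates $\norm{v}$ in the proof of Theorem~\ref{thm:individual}.
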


\begin{theorem}[Saturation-induced Jacobian attenuation]
\label{thm:main}
Let $U\subset\R^d$ be convex with $q$ layers $\delta$-saturated on $U$
(Definition~\ref{def:saturation}).
\begin{enumerate}[label=\textup{(\roman*)}]
\item For every $x\in U$,
  \begin{equation}
    \norm{Df_\theta(x)}\le C(U)
    = C_W\cdot\delta^q\cdot\prod_{k\notin\mathcal{K}}M_k(U).
    \label{eq:jac_attenuated}
  \end{equation}
\item If $h(\cdot),\tilde{h}(\cdot)$ remain in $U$ on $[t_0,t]$,
  \begin{equation}
    \norm{h(t)-\tilde{h}(t)}\le e^{C(U)(t-t_0)}\norm{h_0-\tilde{h}_0}.
    \label{eq:main_bound}
  \end{equation}
\item If $\Lambda_\sigma:=\sup_{x\in\R}|\sigma'(x)|\le 1$
  (e.g.\ $\sigma=\tanh$), then $C(U)\le C_W\delta^q$.
\end{enumerate}
\end{theorem}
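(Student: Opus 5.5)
The proof will proceed part by part, with (i) doing essentially all the work and (ii), (iii) following from it.

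For (i), fix $x\in U$ and start from the factorisation of Proposition~\ref{prop:jacobian}, $Df_\theta(x)=W_L D_{L-1}(x)W_{L-1}\cdots D_1(x)W_1$. Submultiplicativity of the operator norm gives
\[
\norm{Df_\theta(x)}\le \prod_{\ell=1}^L\norm{W_\ell}\prod_{k=1}^{L-1}\norm{D_k(x)} = C_W\prod_{k=1}^{L-1}\norm{D_k(x)},
\]
which is exactly \eqref{eq:jac_bound}. By \eqref{eq:Dk_norm}, $\norm{D_k(x)}=\max_i|\sigma'(a_{k,i}(x))|$. Split the product over $k\in\mathcal{K}$ and $k\notin\mathcal{K}$:
\begin{itemize}
\item for $k\in\mathcal{K}$, the saturation hypothesis \eqref{eq:saturation} gives $\norm{D_k(x)}\le\delta$, contributing $\delta^q$;
\item for $k\notin\mathcal{K}$, the definition \eqref{eq:Mk} of the supremum gives $\norm{D_k(x)}\le M_k(U)$.
\end{itemize}
All factors are nonnegative, so the inequalities multiply and we obtain \eqref{eq:jac_attenuated}. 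If some $M_k(U)=\infty$, the bound is vacuous but still true, so no extra hypothesis is needed.

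For (ii), invoke Lemma~\ref{lem:mvt}. Both trajectories stay in the convex set $U$ on $[t_0,t]$, so the integrand satisfies $\sup_{\xi\in U}\norm{Df_\theta(\xi)}\le C(U)$ by part (i). The integral is then at most $C(U)(t-t_0)$, and \eqref{eq:main_bound} follows by monotonicity of $\exp$.

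The only point needing care is that Lemma~\ref{lem:mvt} genuinely requires convexity of $U$. This is because its proof uses the mean-value representation
\[
f_\theta(h)-f_\theta(\tilde h)=\int_0^1 Df_\theta\bigl(\tilde h+s(h-\tilde h)\bigr)\,ds\,(h-\tilde h),
\]
in which the segment joining $h$ and $\tilde h$ must lie in $U$. That segment is where the bound from (i) is applied, so convexity is the hypothesis that makes (i) usable. After this step, Gr\"onwall (Lemma~\ref{lem:gronwall}) applied to $y=\norm{h-\tilde h}$ yields the estimate. I expect no real obstacle here, since the lemma is taken as given.

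For (iii), if $\Lambda_\sigma\le1$ then $M_k(U)\le\Lambda_\sigma\le 1$ for every $k\notin\mathcal{K}$. The product $\prod_{k\notin\mathcal{K}}M_k(U)$ is therefore at most $1$, which gives $C(U)\le C_W\delta^q$.
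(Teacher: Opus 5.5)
Your proposal is correct and follows the paper's proof essentially step for step. In (i) you apply submultiplicativity to the factorisation of Proposition~\ref{prop:jacobian} and split the product over $k\in\mathcal{K}$ and $k\notin\mathcal{K}$; in (ii) you substitute (i) into Lemma~\ref{lem:mvt}; and in (iii) you use $M_k(U)\le\Lambda_\sigma\le 1$.
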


\begin{proof}
\textit{(i)}
From~\eqref{eq:jac_bound} and~\eqref{eq:Dk_norm}:
\[
  \norm{Df_\theta(x)}
  \le C_W\prod_{k=1}^{L-1}\norm{D_k(x)}.
\]
We bound each factor separately for $x\in U$:
\begin{itemize}[noitemsep]
\item If $k\in\mathcal{K}$: by~\eqref{eq:saturation},
  $\norm{D_k(x)}=\max_i|\sigma'(a_{k,i}(x))|\le\delta$.
  There are $|\mathcal{K}|=q$ such factors, contributing $\delta^q$.
\item If $k\notin\mathcal{K}$: by definition~\eqref{eq:Mk},
  $\norm{D_k(x)}\le M_k(U)$.
\end{itemize}
Therefore
\[
  \norm{Df_\theta(x)}
  \le C_W\cdot\prod_{k\in\mathcal{K}}\delta\cdot\prod_{k\notin\mathcal{K}}M_k(U)
  = C_W\cdot\delta^q\cdot\prod_{k\notin\mathcal{K}}M_k(U) = C(U).
\]

\textit{(ii)}
Since both trajectories stay in $U$ on $[t_0,t]$,
part~(i) gives $\sup_{\xi\in U}\norm{Df_\theta(\xi)}\le C(U)$.
Substituting into Lemma~\ref{lem:mvt}:
\[
  \norm{h(t)-\tilde{h}(t)}
  \le\exp\!\left(\int_{t_0}^t C(U)\,d\tau\right)\norm{h_0-\tilde{h}_0}
  = e^{C(U)(t-t_0)}\norm{h_0-\tilde{h}_0}.
\]

\textit{(iii)}
For each $k\notin\mathcal{K}$ and $x\in U$:
$\max_i|\sigma'(a_{k,i}(x))|\le\sup_{z\in\R}|\sigma'(z)|=\Lambda_\sigma\le 1$,
so $M_k(U)\le 1$ and $\prod_{k\notin\mathcal{K}}M_k(U)\le 1$.
Thus $C(U)=C_W\delta^q\prod_{k\notin\mathcal{K}}M_k(U)\le C_W\delta^q$.
\end{proof}

\begin{remark}
For $\sigma=\tanh$ and $|a_{k,i}(x)|\ge r$ on $U$:
$\delta\le\sech^2(r)\le 4e^{-2r}$, so $\delta^q\le 4^q e^{-2rq}$.
\end{remark}

\begin{remark}[Tightness]\label{rem:tightness}
Bound~\eqref{eq:jac_attenuated} is sharp in $\delta$; the factor
$C_W$ is refined in Section~\ref{sec:refined}.
\end{remark}

\begin{corollary}[Activation comparison principle]
\label{cor:comparison}
Let $f^{(1)}_\theta$ and $f^{(2)}_\theta$ share weights $\{W_\ell,b_\ell\}$
but use activations $\sigma_1,\sigma_2$.  Let $h^{(1)}(\cdot)$ be a
trajectory of the $\sigma_1$-system.  If
\begin{equation}
  \max_i\lvert\sigma_1'(a^{(1)}_{k,i}(\tau))\rvert
  \le\max_i\lvert\sigma_2'(a^{(1)}_{k,i}(\tau))\rvert,
  \qquad\forall\,\tau\in[t_0,t],\;k=1,\ldots,L-1,
  \label{eq:comparison_hyp}
\end{equation}
then
\begin{equation}
  \int_{t_0}^t\norm{Df^{(1)}_\theta(h^{(1)}(\tau))}\,d\tau
  \le C_W\int_{t_0}^t\prod_{k=1}^{L-1}
    \max_i\lvert\sigma_2'(a^{(1)}_{k,i}(\tau))\rvert\,d\tau.
  \label{eq:comparison}
\end{equation}
In particular, for $\sigma_1=\tanh$, $\sigma_2=\mathrm{SiLU}$,
since $\mathrm{SiLU}'(x)\to 1$ as $x\to+\infty$, the
obstruction does not apply to non-saturating
activations~\cite{Ramachandran2017,Elfwing2018,LiLiuLiveraniZuazua2026}.
\end{corollary}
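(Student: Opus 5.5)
The plan is to apply the pointwise bound~\eqref{eq:jac_bound} along the trajectory $h^{(1)}(\cdot)$ and then replace each activation factor using hypothesis~\eqref{eq:comparison_hyp}. We fix $\tau\in[t_0,t]$ and write $a^{(1)}_{k,i}(\tau):=a_{k,i}(h^{(1)}(\tau))$ for the pre-activations of the $\sigma_1$-network along its own trajectory. Proposition~\ref{prop:jacobian}, applied to $f^{(1)}_\theta$, gives $Df^{(1)}_\theta(h^{(1)}(\tau))=W_L D^{(1)}_{L-1}W_{L-1}\cdots D^{(1)}_1 W_1$ with $D^{(1)}_k=\diag(\sigma_1'(a^{(1)}_{k,i}(\tau)))$. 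Submultiplicativity then yields, exactly as in~\eqref{eq:jac_bound},
\[
\norm{Df^{(1)}_\theta(h^{(1)}(\tau))}\le C_W\prod_{k=1}^{L-1}\max_i\lvert\sigma_1'(a^{(1)}_{k,i}(\tau))\rvert .
\]
Here $C_W$ is the same for both networks, because the weights are shared.

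Next, each factor in the product is nonnegative, and by~\eqref{eq:comparison_hyp} it is dominated by the corresponding $\sigma_2$ factor evaluated at the \emph{same} pre-activations $a^{(1)}_{k,i}(\tau)$. Since a product of nonnegative numbers is monotone in each factor, we obtain the pointwise inequality
\[
\norm{Df^{(1)}_\theta(h^{(1)}(\tau))}\le C_W\prod_{k}\max_i\lvert\sigma_2'(a^{(1)}_{k,i}(\tau))\rvert
\]
for every $\tau$. Integrating over $[t_0,t]$ gives~\eqref{eq:comparison}. Both integrands are continuous in $\tau$, because $\sigma_j\in C^1$ and $h^{(1)}$ is continuous, so both integrals exist.

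The only delicate point is bookkeeping rather than analysis. The right-hand side is not the Jacobian of the $\sigma_2$-system along its own trajectory. It is $\sigma_2'$ evaluated at the $\sigma_1$-pre-activations, and the argument must not conflate the two. For the SiLU remark, note that $\tanh'(x)=\sech^2(x)\to0$, while $\mathrm{SiLU}'(x)\to1$ as $x\to+\infty$. Hence, for large positive pre-activations, the comparison hypothesis holds with room to spare, and the $\sigma_2$ factor carries no $\delta^q$ decay. In other words, the attenuation mechanism of Theorem~\ref{thm:main} comes entirely from the saturating activation. This last claim is interpretive: for large negative pre-activations $\mathrm{SiLU}'$ also tends to $0$, so the remark should be read for the positive-saturation regime.
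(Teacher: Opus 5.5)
Your proposal is correct and matches the paper's proof: bound $\norm{Df^{(1)}_\theta(h^{(1)}(\tau))}$ by submultiplicativity of the Jacobian factorisation, replace each nonnegative $\sigma_1'$-factor by the larger $\sigma_2'$-factor using the hypothesis, and integrate over $[t_0,t]$. Your extra points are valid refinements that the paper leaves implicit: that the integrands are continuous, that the right-hand side is $\sigma_2'$ evaluated at the $\sigma_1$-pre-activations, and that the SiLU remark concerns only the positive-saturation regime, since $\mathrm{SiLU}'(x)\to 0$ as $x\to-\infty$.
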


\begin{proof}
Apply~\eqref{eq:jac_bound} to $f^{(1)}_\theta$ at $h^{(1)}(\tau)$:
\[
  \norm{Df^{(1)}_\theta(h^{(1)}(\tau))}
  \le C_W\prod_{k=1}^{L-1}\max_i\lvert\sigma_1'(a^{(1)}_{k,i}(\tau))\rvert.
\]
By hypothesis~\eqref{eq:comparison_hyp}, replacing each factor
$\max_i|\sigma_1'(a^{(1)}_{k,i}(\tau))|$ by the larger
$\max_i|\sigma_2'(a^{(1)}_{k,i}(\tau))|$:
\[
  \norm{Df^{(1)}_\theta(h^{(1)}(\tau))}
  \le C_W\prod_{k=1}^{L-1}\max_i\lvert\sigma_2'(a^{(1)}_{k,i}(\tau))\rvert.
\]
Integrating over $[t_0,t]$ gives~\eqref{eq:comparison}.
\end{proof}

\section{The Floquet--Liouville Obstruction}
\label{sec:floquet}

This section contains the main results of the paper: the collapse
of the Floquet spectrum under activation saturation.

\subsection{Liouville formula}

\begin{lemma}[Liouville--Abel--Jacobi \cite{CoddingtonLevinson1955,Hartman1964,Chicone2006,GuckenheimerHolmes1983}]
\label{lem:liouville}
Let $f_\theta\in C^1(\R^d;\R^d)$ and let $h(t)=\Phi^\theta_t(h_0)$.
Set $\Psi(t):=D\Phi^\theta_t(h_0)\in\R^{d\times d}$.  Then $\Psi$ satisfies
the variational equation
\begin{equation}
  \dot{\Psi}(t) = Df_\theta(h(t))\,\Psi(t), \qquad \Psi(t_0)=I,
  \label{eq:variational}
\end{equation}
and
\begin{equation}
  \frac{d}{dt}\det\Psi(t)
  = \Tr\!\bigl(Df_\theta(h(t))\bigr)\cdot\det\Psi(t),
  \qquad \det\Psi(t_0)=1.
  \label{eq:liouville_ode}
\end{equation}
For any $T$-periodic orbit $\gamma$ with monodromy matrix
$M_\gamma:=D\Phi^\theta_T(h_0)=\Psi(T)$,
\begin{equation}
  \det M_\gamma
  = \exp\!\left(\int_0^T\Tr\!\bigl(Df_\theta(\gamma(t))\bigr)\,dt\right).
  \label{eq:liouville}
\end{equation}
\end{lemma}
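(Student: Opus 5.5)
The proof has three parts: derive the variational equation, prove Jacobi's formula for the derivative of a determinant, and integrate the resulting scalar linear ODE over one period.

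\emph{Variational equation.} Since $f_\theta\in C^1$, the flow $(t,h_0)\mapsto\Phi^\theta_t(h_0)$ is $C^1$ in $h_0$. Its mixed partial derivatives are continuous, so we may differentiate $\partial_t\Phi^\theta_t(h_0)=f_\theta(\Phi^\theta_t(h_0))$ with respect to $h_0$ and interchange the order of $\partial_t$ and $D_{h_0}$. The chain rule then gives $\dot\Psi(t)=Df_\theta(h(t))\Psi(t)$. The initial condition $\Psi(t_0)=I$ follows from $\Phi^\theta_{t_0}=\mathrm{id}$. This is the classical differentiability-with-respect-to-initial-data theorem, which I would cite from \cite{Hartman1964,Chicone2006} rather than reprove.

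\emph{Jacobi's formula.} Write $A(t):=Df_\theta(h(t))$, which is continuous in $t$. I would use the semigroup structure to reduce to a computation at $\Psi=I$. For small $\varepsilon$,
\[
\Psi(t+\varepsilon)=\bigl(I+\varepsilon A(t)+o(\varepsilon)\bigr)\Psi(t).
\]
Expanding the determinant gives $\det(I+\varepsilon A+o(\varepsilon))=1+\varepsilon\Tr A+o(\varepsilon)$. Only the diagonal product contributes at first order, because every other permutation term contains at least two off-diagonal factors and is therefore $O(\varepsilon^2)$. Multiplicativity of $\det$ then yields
\[
\det\Psi(t+\varepsilon)=\det\Psi(t)\,\bigl(1+\varepsilon\Tr A(t)\bigr)+o(\varepsilon),
\]
which is \eqref{eq:liouville_ode}. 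An alternative route expands $\frac{d}{dt}\det\Psi$ row by row: each row $i$ of $\Psi$ is replaced by $\sum_j A_{ij}\,\mathrm{row}_j(\Psi)$, and by multilinearity and the alternating property only the $j=i$ term survives. I would present the first version because it avoids cofactor notation.

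\emph{Integration.} Equation \eqref{eq:liouville_ode} is a scalar linear ODE $y'=\Tr(A(t))\,y$ with $y(t_0)=1$. Its unique solution is
\[
\det\Psi(t)=\exp\!\Bigl(\int_{t_0}^t\Tr A(s)\,ds\Bigr).
\]
Uniqueness can be checked by differentiating $y(t)\exp(-\int_{t_0}^t\Tr A)$, whose derivative is zero. In particular $\det\Psi(t)$ never vanishes. For the periodic orbit, I would take $t_0=0$ and $h_0=\gamma(0)$, so that $h(t)=\gamma(t)$ on $[0,T]$. Evaluating at $t=T$ and using $M_\gamma=\Psi(T)$ gives \eqref{eq:liouville}.

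The main subtle step is the first: justifying that $\Psi$ exists, is differentiable in $t$, and solves the variational equation. This requires only $C^1$ regularity, with no extra smoothness, and is handled entirely by the cited standard theorem. Everything after that is elementary linear algebra.
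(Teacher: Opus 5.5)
Your proposal is correct. The paper gives no proof of its own for this lemma and simply defers to the classical texts it cites, and your three steps are precisely the standard proof found there: the variational equation from $C^1$ dependence on initial data, Jacobi's formula via $\Psi(t+\varepsilon)=D\Phi^\theta_\varepsilon(h(t))\,\Psi(t)=(I+\varepsilon A(t)+o(\varepsilon))\Psi(t)$, and integration of the resulting scalar linear ODE with $t_0=0$, $h_0=\gamma(0)$. Because you obtain that factorisation from the flow's composition rule rather than by pulling an $o(\varepsilon)$ term through $\Psi(t)^{-1}$, you never need to assume $\Psi(t)$ is invertible, so no step is missing.
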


\subsection{Trace bound}

\begin{lemma}[\cite{HornJohnson2013}]
\label{lem:trace}
For any $A\in\R^{d\times d}$, where $\norm{\cdot}=\norm{\cdot}_2$
denotes the operator (spectral) norm,
\begin{equation}
  \lvert\Tr(A)\rvert \le d\,\norm{A}.
  \label{eq:trace_bound}
\end{equation}
\end{lemma}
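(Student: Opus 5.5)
The plan is to write the trace as a sum of diagonal entries, $\Tr(A)=\sum_{i=1}^d e_i^\top A e_i$, where $e_1,\ldots,e_d$ are the standard basis vectors of $\R^d$. Each term can then be bounded by the operator norm. The triangle inequality gives
\[
  \lvert\Tr(A)\rvert\le\sum_{i=1}^d\lvert e_i^\top A e_i\rvert .
\]
For each $i$, Cauchy--Schwarz and the definition of $\norm{\cdot}$ from the Notation paragraph give
\[
  \lvert e_i^\top A e_i\rvert\le\norm{e_i}_2\,\norm{Ae_i}_2\le\norm{A}\,\norm{e_i}_2^2=\norm{A}.
\]
Summing these $d$ bounds yields~\eqref{eq:trace_bound}.

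As a cross-check, one could use the eigenvalues $\lambda_1,\ldots,\lambda_d$ of $A$, which may be complex. Since $\Tr(A)=\sum_i\lambda_i$ and the spectral radius satisfies $\rho(A)\le\norm{A}$, the same bound follows. This route needs the comparison of spectral radius with operator norm for non-normal real matrices, so I would keep the basis-vector argument as the main proof because it is fully elementary.

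No step is a genuine obstacle. The only point needing care is that $\norm{\cdot}$ is the operator $2$-norm, not the Frobenius norm. The inequality $\lvert\Tr(A)\rvert\le\sqrt d\,\norm{A}_F$ is also true, but it is not what is needed in Theorems~\ref{thm:liouville}--\ref{thm:individual}. The factor $d$ is attained by $A=I$, so the bound is sharp.
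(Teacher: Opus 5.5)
Your proof is correct and follows the paper's argument: the paper also bounds each diagonal entry by $|a_{ii}|=|e_i^\top A e_i|\le\norm{A}_2\norm{e_i}_2^2=\norm{A}_2$ and then sums these bounds using the triangle inequality. Your side remarks are also accurate: the eigenvalue cross-check works because $\rho(A)\le\norm{A}$ holds for any real matrix, and the bound is attained at $A=I$.
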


\begin{proof}
For each $i=1,\ldots,d$, let $e_i$ denote the $i$-th standard basis
vector.  Since $\norm{e_i}_2=1$,
\[
  |a_{ii}| = |e_i^\top A\, e_i| \le \norm{A}_2\,\norm{e_i}_2^2 = \norm{A}_2.
\]
Therefore
$\lvert\Tr(A)\rvert = \lvert\sum_{i=1}^d a_{ii}\rvert
\le \sum_{i=1}^d |a_{ii}| \le d\,\norm{A}_2$.
\end{proof}

\begin{lemma}[Rank--trace bound]
\label{lem:rank_trace}
For any $A\in\R^{d\times d}$ with $\mathrm{rank}(A)\le r$,
\begin{equation}
  \lvert\Tr(A)\rvert \le r\,\norm{A}.
  \label{eq:rank_trace}
\end{equation}
\end{lemma}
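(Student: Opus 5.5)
The plan is to push the argument of Lemma~\ref{lem:trace} through a basis adapted to the range of $A$. Write $V:=\mathrm{range}(A)\subseteq\R^d$, so $\dim V=\mathrm{rank}(A)=:s\le r$. Choose an orthonormal basis $u_1,\ldots,u_s$ of $V$ and complete it to an orthonormal basis $u_1,\ldots,u_d$ of $\R^d$. Let $Q$ be the orthogonal matrix with columns $u_i$. The trace is invariant under orthogonal similarity, so $\Tr(A)=\Tr(Q^\top A Q)=\sum_{i=1}^d u_i^\top A u_i$.

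The key step is to show that the terms with $i>s$ vanish. For any $i$, the vector $Au_i$ lies in $V$. For $i>s$, $u_i\perp V$, so $u_i^\top A u_i=0$. Hence
\[
\Tr(A)=\sum_{i=1}^{s}u_i^\top A u_i .
\]

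Each of the remaining diagonal entries is controlled exactly as in Lemma~\ref{lem:trace}:
\[
|u_i^\top A u_i|\le\norm{u_i}_2\,\norm{A}\,\norm{u_i}_2=\norm{A}.
\]
Summing over at most $s$ terms gives $|\Tr(A)|\le s\norm{A}\le r\norm{A}$. The case $A=0$ (so $s=0$) is trivially consistent.

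The only point requiring care is the vanishing of the off-range diagonal entries. This rests on the fact that $Au_i$ always lies in the range of $A$, regardless of whether $u_i$ itself does, which is why the basis must be adapted to $\mathrm{range}(A)$ rather than to $\ker(A)$. As an alternative check, I would also note the equivalent route via the SVD $A=\sum_{j=1}^{s}\sigma_j v_j w_j^\top$. This gives $|\Tr(A)|=\bigl|\sum_j\sigma_j w_j^\top v_j\bigr|\le\sum_{j=1}^{s}\sigma_j\le s\,\sigma_1=s\norm{A}$, which yields the same bound and could be used if a slicker presentation is preferred.
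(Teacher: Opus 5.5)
Your argument is correct, but it takes a different route from the paper. The paper argues spectrally. From $\dim\ker A\ge d-r$ it deduces that $0$ has algebraic multiplicity at least $d-r$. Hence $\Tr(A)$ is a sum of at most $r$ nonzero (possibly complex) eigenvalues, each bounded by $\norm{A}$ via a unit eigenvector in $\mathbb{C}^d$. You instead keep the diagonal-entry mechanism of Lemma~\ref{lem:trace} and choose an orthonormal basis in which $d-s$ of the diagonal entries vanish.

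What your version buys is that it is entirely real and elementary. It needs neither the passage from geometric to algebraic multiplicity nor the fact that the real spectral norm equals the operator norm on $\mathbb{C}^d$. It also shows that Lemma~\ref{lem:rank_trace} is literally Lemma~\ref{lem:trace} with $d$ replaced by $\mathrm{rank}(A)$.

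What the paper's version buys is the intermediate estimate $|\Tr(A)|\le\sum_i|\lambda_i|$. This sees only the nonzero eigenvalues, so it vanishes for every nilpotent $A$ whatever its rank. By Weyl's inequality it never exceeds the nuclear-norm intermediate $\sum_j\sigma_j$ of your SVD variant. Both chains end at the same bound $r\norm{A}$.

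One correction to your closing remark: the basis does \emph{not} need to be adapted to $\mathrm{range}(A)$ rather than $\ker(A)$. If the last $d-s$ vectors $u_i$ form an orthonormal basis of $\ker(A)$, then $Au_i=0$ and those diagonal entries vanish even more directly. That version starts from exactly the fact $\dim\ker A\ge d-r$ that the paper uses, so the remark should be dropped; the main proof is unaffected.
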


\begin{proof}
Since $\dim\ker(A)\ge d-r$, the eigenvalue $0$ has algebraic
multiplicity at least $d-r$, so $A$ has at most $r$ nonzero
eigenvalues $\lambda_1,\ldots,\lambda_r$ (counted with algebraic
multiplicity).
For each $\lambda_i$, let $v_i\in\mathbb{C}^d$ be a corresponding
unit eigenvector ($Av_i=\lambda_iv_i$, $\norm{v_i}_2=1$).
Since the spectral norm of a real matrix coincides with its
operator norm over $\mathbb{C}^d$,
we have
$|\lambda_i|=\norm{Av_i}_2\le\norm{A}\norm{v_i}_2=\norm{A}$.
Therefore
\[
  \lvert\Tr(A)\rvert = \left\lvert\sum_{i=1}^r\lambda_i\right\rvert
  \le \sum_{i=1}^r|\lambda_i| \le r\,\norm{A}. \qedhere
\]
\end{proof}

\subsection{Main theorems}

\begin{theorem}[Floquet--Liouville conservativity obstruction]
\label{thm:liouville}
Let $\gamma$ be a $T$-periodic orbit of~\eqref{eq:node} contained in
a convex set $U$ where $q$ layers are $\delta$-saturated, and let
$M_\gamma:=D\Phi^\theta_T(h_0)$ be the monodromy matrix.
\begin{enumerate}[label=\textup{(\roman*)}]
\item \begin{equation}
    \lvert\ln\det M_\gamma\rvert \le d\,C(U)\,T.
    \label{eq:floquet_bound}
  \end{equation}
\item Let $\mu_0=1$ denote the trivial Floquet
  multiplier (proved below) and $\mu_1,\ldots,\mu_{d-1}$ the
  remaining eigenvalues of $M_\gamma$ counted with algebraic
  multiplicity.  Then
  $\det M_\gamma
  = \prod_{i=0}^{d-1}\mu_i
  = \prod_{i=1}^{d-1}\mu_i$.
  By Lemma~\ref{lem:liouville}, $\det M_\gamma=\exp(\cdots)>0$,
  so the real logarithm $\ln\det M_\gamma$ is well defined.
  Therefore
  \begin{equation}
    \left\lvert\sum_{i=1}^{d-1}\ln|\mu_i|\right\rvert
    = \lvert\ln\det M_\gamma\rvert \le d\,C(U)\,T.
    \label{eq:floquet_product}
  \end{equation}
\item If additionally $\Lambda_\sigma\le 1$
  and $d\,C_W\delta^q T < \eta$ for some $\eta>0$,
  no $T$-periodic orbit $\gamma\subset U$ can achieve
  $|\det M_\gamma|\le e^{-\eta}$.
\end{enumerate}
\end{theorem}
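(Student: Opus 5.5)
The argument combines three earlier results: the Liouville identity (Lemma~\ref{lem:liouville}), the trace bound (Lemma~\ref{lem:trace}), and the attenuation estimate (Theorem~\ref{thm:main}). The only new ingredient is the trivial multiplier $\mu_0=1$.

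\emph{Part (i).} By~\eqref{eq:liouville}, $\det M_\gamma$ is the exponential of a real integral, so it is positive and
\[
\ln\det M_\gamma=\int_0^T\Tr\bigl(Df_\theta(\gamma(t))\bigr)\,dt .
\]
Since $\gamma(t)\in U$ for all $t\in[0,T]$, Theorem~\ref{thm:main}(i) gives $\norm{Df_\theta(\gamma(t))}\le C(U)$. Lemma~\ref{lem:trace} then yields $|\Tr(Df_\theta(\gamma(t)))|\le d\,C(U)$ pointwise. Moving the absolute value inside the integral and integrating over $[0,T]$ gives~\eqref{eq:floquet_bound}.

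\emph{Part (ii).} The key step is proving $\mu_0=1$, which I expect to be the only point needing care. Differentiate the identity $\Phi^\theta_t(\gamma(s))=\gamma(t+s)$ in $s$ at $s=0$. This shows $v(t):=\dot\gamma(t)=f_\theta(\gamma(t))$ satisfies $v(t)=\Psi(t)\,v(0)$, so $v$ solves the variational equation~\eqref{eq:variational}. By $T$-periodicity, $M_\gamma v(0)=v(T)=v(0)$.

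We need $v(0)=f_\theta(h_0)\neq0$. If $f_\theta(h_0)=0$, then $h_0$ is an equilibrium, which is not a genuine periodic orbit. I would make explicit the convention that $\gamma$ is non-constant. By uniqueness of solutions of the $C^1$ ODE, a non-constant orbit never passes through a zero of $f_\theta$.

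Hence $1$ is an eigenvalue of $M_\gamma$. Writing its spectrum with algebraic multiplicity as $\mu_0=1,\mu_1,\dots,\mu_{d-1}$, the determinant is the product of eigenvalues, giving $\det M_\gamma=\prod_{i=1}^{d-1}\mu_i$.

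Because $M_\gamma$ is real, its nonreal eigenvalues come in conjugate pairs. Therefore $\det M_\gamma=|\det M_\gamma|=\prod_{i\ge1}|\mu_i|$. None of the $\mu_i$ vanishes, since $\det M_\gamma>0$. Taking logarithms gives $\ln\det M_\gamma=\sum_{i=1}^{d-1}\ln|\mu_i|$, and~\eqref{eq:floquet_product} follows from part (i).

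\emph{Part (iii).} If $\Lambda_\sigma\le1$, Theorem~\ref{thm:main}(iii) gives $C(U)\le C_W\delta^q$. Part (i) then gives
\[
\ln\det M_\gamma\ge -d\,C(U)\,T\ge -d\,C_W\delta^qT>-\eta .
\]
So $|\det M_\gamma|=\det M_\gamma>e^{-\eta}$, which contradicts $|\det M_\gamma|\le e^{-\eta}$. Hence no such orbit exists in $U$.
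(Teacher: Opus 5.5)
Your proof is correct and follows the paper's argument: the Liouville identity with the pointwise trace bound and Theorem~\ref{thm:main}(i) for (i), $f_\theta(h_0)$ as an eigenvector of $M_\gamma$ with eigenvalue $1$ for (ii), and $C(U)\le C_W\delta^q$ for (iii). The differences are cosmetic. You get $\Psi(t)f_\theta(h_0)=f_\theta(\gamma(t))$ by differentiating $\Phi^\theta_t(\gamma(s))=\gamma(t+s)$ rather than by uniqueness for the variational equation, and your algebraic-multiplicity bookkeeping is cleaner than the paper's ``complementary subspace'' phrasing. The conjugate-pair remark is superfluous, since $\det M_\gamma>0$ already gives $\ln\det M_\gamma=\sum_i\ln|\mu_i|$.
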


\begin{proof}
\textit{(i)}
By Lemma~\ref{lem:liouville} and the triangle inequality:
\[
  |\ln\det M_\gamma|
  = \left|\int_0^T\Tr(Df_\theta(\gamma(t)))\,dt\right|
  \le \int_0^T\lvert\Tr(Df_\theta(\gamma(t)))\rvert\,dt.
\]
Applying Lemma~\ref{lem:trace} pointwise:
$|\Tr(Df_\theta(\gamma(t)))|\le d\,\norm{Df_\theta(\gamma(t))}$.
Since $\gamma(t)\in U$, Theorem~\ref{thm:main}(i) gives
$\norm{Df_\theta(\gamma(t))}\le C(U)$.  Integrating:
\[
  |\ln\det M_\gamma|
  \le \int_0^T d\,C(U)\,dt = d\,C(U)\,T.
\]

\textit{(ii)}
Set $v(t):=\Psi(t)\,f_\theta(h_0)$ and $w(t):=f_\theta(h(t))$.
Both solve~\eqref{eq:variational} with the same initial condition
$v(0)=w(0)=f_\theta(h_0)$, so $v\equiv w$ by uniqueness.
At $t=T$: $M_\gamma f_\theta(h_0)=v(T)=w(T)=f_\theta(h_0)\ne 0$
(since $\gamma$ is not an equilibrium),
giving the trivial multiplier $\mu_0=1$.
The remaining $\mu_1,\ldots,\mu_{d-1}$ live on the complementary
subspace, so $\det M_\gamma=\prod_{i=1}^{d-1}\mu_i$.
Bound~\eqref{eq:floquet_product} follows from~\eqref{eq:floquet_bound}.

\textit{(iii)}
Suppose $|\det M_\gamma|\le e^{-\eta}$.
By~\eqref{eq:liouville}, $\det M_\gamma>0$, so
$\ln\det M_\gamma\le -\eta<0$ and $|\ln\det M_\gamma|\ge\eta$.
Bound~\eqref{eq:floquet_bound} forces $d\,C(U)\,T\ge\eta$.
Since $C(U)\le C_W\delta^q$ when $M_k(U)\le 1$,
the condition $d\,C_W\delta^q T<\eta$
implies $d\,C(U)\,T<\eta$, a contradiction.
\end{proof}

\begin{theorem}[Individual Floquet multiplier bounds]
\label{thm:individual}
Let $\gamma\subset U$ be a $T$-periodic orbit of~\eqref{eq:node}
with monodromy matrix $M_\gamma$ and Floquet multipliers
$\mu_0=1,\mu_1,\ldots,\mu_{d-1}$.
Under the saturation condition of Definition~\ref{def:saturation},
\begin{equation}
  e^{-C(U)\,T} \;\le\; |\mu_i| \;\le\; e^{C(U)\,T},
  \qquad i=0,\ldots,d-1.
  \label{eq:indiv_bound}
\end{equation}
Equivalently, $|\ln|\mu_i||\le C(U)\,T$ for each $i$.
In particular, if $\Lambda_\sigma\le 1$
and $C_W\delta^q T<\eta$, no individual multiplier
satisfies $|\mu_i|\le e^{-\eta}$.
\end{theorem}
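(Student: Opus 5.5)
We bound the monodromy matrix and its inverse in operator norm; each Floquet multiplier is then caught between the resulting upper and lower bounds. The upper bound comes from the variational equation~\eqref{eq:variational}, $\dot\Psi=Df_\theta(\gamma(t))\Psi$ with $\Psi(0)=I$. For a fixed vector $v$ we put $y(t):=\norm{\Psi(t)v}$. This function is absolutely continuous: where $y>0$ it satisfies
\[
  y'\le\norm{Df_\theta(\gamma(t))}\,y\le C(U)\,y
\]
by Theorem~\ref{thm:main}(i), since $\gamma(t)\in U$. To avoid differentiability issues at zeros of $y$, we can work with $\sqrt{y^2+\epsilon}$ instead, or with the integral form. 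Lemma~\ref{lem:gronwall} then gives $\norm{\Psi(T)v}\le e^{C(U)T}\norm{v}$, so $\norm{M_\gamma}\le e^{C(U)T}$.

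For the lower bound we control $M_\gamma^{-1}$. We set $\Xi(t):=\Psi(t)^{-1}$, which exists because $\det\Psi(t)>0$ by~\eqref{eq:liouville_ode}. Differentiating $\Psi\Xi=I$ gives
\[
  \dot\Xi=-\Xi\,Df_\theta(\gamma(t)).
\]
Transposing, $\dot\Xi^\top=-Df_\theta(\gamma(t))^\top\Xi^\top$, and $\norm{Df_\theta^\top}=\norm{Df_\theta}\le C(U)$. The same Gr\"onwall argument as before then yields $\norm{M_\gamma^{-1}}=\norm{\Xi(T)^\top}\le e^{C(U)T}$. Alternatively, we can integrate the variational equation backward in time from $T$ to $0$, which gives the same estimate directly.

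With both norm bounds in hand, we pass to the spectrum. For any eigenvalue $\mu_i$ of $M_\gamma$ with a unit eigenvector $v_i\in\mathbb{C}^d$, the fact that the real spectral norm equals the complex operator norm (as used in Lemma~\ref{lem:rank_trace}) gives $|\mu_i|=\norm{M_\gamma v_i}\le\norm{M_\gamma}\le e^{C(U)T}$. Likewise $M_\gamma^{-1}v_i=\mu_i^{-1}v_i$, so $|\mu_i|^{-1}\le\norm{M_\gamma^{-1}}\le e^{C(U)T}$. Together these give~\eqref{eq:indiv_bound} for every $i$, including $\mu_0=1$ trivially. Taking logarithms yields the equivalent form $|\ln|\mu_i||\le C(U)T$.

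The final claim follows from Theorem~\ref{thm:main}(iii). When $\Lambda_\sigma\le1$ we have $C(U)\le C_W\delta^q$, so $C(U)T<\eta$, and hence $|\mu_i|\ge e^{-C(U)T}>e^{-\eta}$, which rules out $|\mu_i|\le e^{-\eta}$.

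The step needing the most care is the lower bound. A naive differential inequality for $\norm{\Psi v}$ only bounds growth, so the decay rate has to be controlled through the inverse flow (or the adjoint equation). Once that is set up, everything else is bookkeeping.
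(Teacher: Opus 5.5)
Your proposal is correct and follows essentially the paper's argument: Gr\"onwall on the variational equation bounds both $\norm{M_\gamma}$ and $\norm{M_\gamma^{-1}}$ by $e^{C(U)T}$, the eigenvalue relations for $M_\gamma$ and $M_\gamma^{-1}$ then give both sides of~\eqref{eq:indiv_bound}, and the final claim follows from Theorem~\ref{thm:main}(iii). The one difference is how you bound $M_\gamma^{-1}$: you use the adjoint equation $\dot\Xi^\top=-Df_\theta(\gamma(t))^\top\Xi^\top$ for $\Xi=\Psi^{-1}$, whereas the paper uses the variational equation of the time-reversed flow and $T$-periodicity to identify $D\Phi^\theta_{-T}(h_0)=M_\gamma^{-1}$; your version avoids the periodicity step but is otherwise equivalent.
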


\begin{proof}
\textit{(Upper bound.)}
The fundamental matrix $\Psi(t)=D\Phi^\theta_t(h_0)$ satisfies the
variational equation~\eqref{eq:variational}.
Since $\gamma(t)\in U$ and $\norm{Df_\theta(\gamma(t))}\le C(U)$
by Theorem~\ref{thm:main}(i), the function
$v(t)=\Psi(t)w$ satisfies the linear equation
$\dot{v}=Df_\theta(\gamma(t))v$, so
$\frac{d}{dt}\norm{v}\le\norm{Df_\theta(\gamma(t))}\norm{v}\le C(U)\norm{v}$.
Lemma~\ref{lem:gronwall} then gives
$\norm{\Psi(t)w}\le e^{C(U)t}$$\norm{w}$
($\norm{w}=1$),
hence $\norm{\Psi(T)}\le e^{C(U)T}$.
For any eigenvalue $\mu_i$ with unit eigenvector $v_i$:
$|\mu_i|=\norm{M_\gamma v_i}=\norm{\Psi(T)v_i}\le\norm{\Psi(T)}\le e^{C(U)T}$.

\textit{(Lower bound.)}
Since $\gamma$ is $T$-periodic and $\gamma\subset U$, the backward
orbit $\gamma(-t)=\gamma(T-t)$ also lies entirely in $U$.
Consider the time-reversed system $\dot{u}=-f_\theta(u)$ with
$u(0)=h_0$; its solution is $u(t)=\Phi^\theta_{-t}(h_0)=\gamma(-t)$.
The variational equation for this system reads
\begin{equation}
  \dot{\Psi}_{-}(t) = -Df_\theta(\gamma(-t))\,\Psi_{-}(t),
  \qquad \Psi_{-}(0)=I,
  \label{eq:back_var}
\end{equation}
so $\Psi_{-}(T)=D\Phi^\theta_{-T}(h_0)$.
Since $\Phi^\theta_T(h_0)=h_0$ ($T$-periodicity), the chain rule
applied to $\Phi^\theta_{-T}\circ\Phi^\theta_T=\mathrm{id}$ gives
$D\Phi^\theta_{-T}(\underbrace{\Phi^\theta_T(h_0)}_{=\,h_0})
 \cdot D\Phi^\theta_T(h_0)=I$,
hence $\Psi_{-}(T)=\Psi(T)^{-1}=M_\gamma^{-1}$.
Since $\gamma(-t)\in U$ and $\norm{-Df_\theta(\gamma(-t))}=\norm{Df_\theta(\gamma(-t))}\le C(U)$,
the same Gronwall argument gives $\norm{\Psi_{-}(T)}\le e^{C(U)T}$,
i.e.\ $\norm{M_\gamma^{-1}}\le e^{C(U)T}$.
Since $\det M_\gamma>0$ by Lemma~\ref{lem:liouville},
all eigenvalues of $M_\gamma$ are nonzero, so $M_\gamma^{-1}$ exists
and $M_\gamma^{-1}v_i=(1/\mu_i)v_i$.
Therefore $1/|\mu_i|\le\norm{M_\gamma^{-1}}\le e^{C(U)T}$,
giving $|\mu_i|\ge e^{-C(U)T}$.

\textit{(Last claim.)}
Suppose $|\mu_i|\le e^{-\eta}$.
Then $-\ln|\mu_i|\ge\eta$, so $|\ln|\mu_i||\ge\eta$.
Bound~\eqref{eq:indiv_bound} forces $C(U)T\ge\eta$.
Since $C(U)\le C_W\delta^q$ (Theorem~\ref{thm:main}(iii)),
the condition $C_W\delta^q T<\eta$ implies $C(U)T<\eta$,
a contradiction.
\end{proof}

\begin{remark}[Connection to Floquet (Lyapunov) exponents]
\label{rem:lyapunov}
For a periodic orbit~$\gamma$ with period~$T$, the Floquet
exponents $\lambda_i:=\tfrac{1}{T}\ln|\mu_i|$ coincide with the
Lyapunov exponents of~$\gamma$.
Theorem~\ref{thm:individual} therefore implies
$|\lambda_i|\le C(U)$ for each~$i$, and when
$\Lambda_\sigma\le 1$ this sharpens to $|\lambda_i|\le C_W\delta^q$.
Deep saturation drives all exponents to zero,
limiting or suppressing chaotic sensitivity ($\lambda_i>0$) and
limiting the achievable contraction rate ($\lambda_i<0$ but $|\lambda_i|\to 0$).
In short, saturation collapses the dynamical spectrum
of periodic orbits within~$U$:
neither strong contraction nor strong instability can persist.
\end{remark}

\begin{corollary}[Contraction threshold]
\label{cor:threshold}
Let $\Lambda_\sigma\le 1$.
A periodic orbit $\gamma\subset U$ with period~$T$
can exhibit net contraction $|\det M_\gamma|\le e^{-\eta}$
only if the saturation level satisfies
\begin{equation}\label{eq:threshold}
  \delta
  \;\ge\;
  \Bigl(\frac{\eta}{d\,C_W\,T}\Bigr)^{1/q}.
\end{equation}
Moreover, every individual Floquet (Lyapunov) exponent
satisfies $|\lambda_i|\le C(U)$
(Remark~\ref{rem:lyapunov}).
\end{corollary}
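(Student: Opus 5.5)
The threshold inequality \eqref{eq:threshold} is the contrapositive of Theorem~\ref{thm:liouville}(iii), solved for $\delta$. The exponent bound is Theorem~\ref{thm:individual} divided by $T$. I would argue directly rather than by contradiction.

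Suppose $\gamma\subset U$ is $T$-periodic with $|\det M_\gamma|\le e^{-\eta}$. By Lemma~\ref{lem:liouville}, $\det M_\gamma>0$, so $\ln\det M_\gamma$ is a real number with $\ln\det M_\gamma\le-\eta$, and hence $|\ln\det M_\gamma|\ge\eta$. Theorem~\ref{thm:liouville}(i) gives $|\ln\det M_\gamma|\le d\,C(U)\,T$. Since $\Lambda_\sigma\le 1$, Theorem~\ref{thm:main}(iii) gives $C(U)\le C_W\delta^q$. Chaining these,
\[
  \eta\le d\,C_W\,\delta^q\,T .
\]
Divide by $d\,C_W T>0$. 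This needs $C_W>0$; if $C_W=0$ then $f_\theta$ is constant, so it has no periodic orbits and the statement is vacuous. Then take the $q$-th root, which is monotone on $[0,\infty)$. This yields \eqref{eq:threshold} for $q\ge1$. If $q=0$ the right-hand side of \eqref{eq:threshold} is not meaningful, so I would state that $q\ge1$ is implicitly assumed.

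For the second claim, Theorem~\ref{thm:individual} gives $|\ln|\mu_i||\le C(U)\,T$ for every $i$, including the trivial multiplier $\mu_0=1$. By the definition of $\lambda_i$ in Remark~\ref{rem:lyapunov}, dividing by $T>0$ gives $|\lambda_i|\le C(U)$.

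I expect no step to be a real obstacle. The only care points are the degenerate cases ($C_W=0$ or $q=0$) and the use of the positivity of $\det M_\gamma$ to justify taking the real logarithm.
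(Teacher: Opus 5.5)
Your proposal is correct and matches the paper's argument. The paper runs it as the contrapositive: $d\,C_W\delta^qT<\eta$ forces $|\ln\det M_\gamma|<\eta$ via Theorem~\ref{thm:liouville}(i),(iii), which is the same chain $\eta\le|\ln\det M_\gamma|\le d\,C(U)\,T\le d\,C_W\delta^qT$ that you run directly; your exponent bound is Theorem~\ref{thm:individual} divided by $T$, exactly as in Remark~\ref{rem:lyapunov}, and your treatment of the degenerate cases $C_W=0$ and $q=0$ simply makes explicit what the paper leaves implicit.
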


\begin{proof}
By Theorem~\ref{thm:liouville}(i) and~(iii),
$d\,C_W\delta^q T<\eta$ implies
$|\ln\det M_\gamma|<\eta$, hence $e^{-\eta}<\det M_\gamma<e^{\eta}$.
Rearranging: $\delta<(\eta/(d\,C_W T))^{1/q}$.
\end{proof}

\begin{corollary}[Bottleneck Floquet--Liouville bound]
\label{cor:bottleneck}
Let the hypotheses of Theorem~\ref{thm:liouville} hold, and set
$r := \min_{1\le k\le L-1} d_k$
to be the minimum hidden-layer width (the \emph{bottleneck width}).
Then $\mathrm{rank}(Df_\theta(x))\le r$ for all $x\in\R^d$, and
\begin{equation}
  \lvert\ln\det M_\gamma\rvert \le r\,C(U)\,T.
  \label{eq:floquet_bottleneck}
\end{equation}
When $r<d$, this improves
Theorem~\ref{thm:liouville}\textup{(i)} by a factor $d/r$.
\end{corollary}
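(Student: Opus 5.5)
The plan has two parts: a rank bound on the Jacobian, and then a rerun of the proof of Theorem~\ref{thm:liouville}(i) with Lemma~\ref{lem:rank_trace} in place of Lemma~\ref{lem:trace}.

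\emph{Rank bound.} By Proposition~\ref{prop:jacobian}, for every $x\in\R^d$ we have the factorisation
\[
Df_\theta(x)=W_L\,D_{L-1}(x)\,W_{L-1}\cdots D_1(x)\,W_1 .
\]
Let $k^\ast$ be an index with $d_{k^\ast}=r$. We split the product at layer $k^\ast$:
\[
Df_\theta(x)=\underbrace{\bigl(W_L D_{L-1}(x)\cdots W_{k^\ast+1}\bigr)}_{d\times d_{k^\ast}}\;\underbrace{\bigl(D_{k^\ast}(x)\,W_{k^\ast}\cdots D_1(x)W_1\bigr)}_{d_{k^\ast}\times d}.
\]
The elementary fact $\mathrm{rank}(AB)\le\min(\mathrm{rank}A,\mathrm{rank}B)$ then applies. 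Since the left factor has only $d_{k^\ast}$ columns, $\mathrm{rank}\,Df_\theta(x)\le d_{k^\ast}=r$. Equivalently, $Df_\theta(x)$ factors through $\R^{d_{k^\ast}}$. This holds for all $x\in\R^d$, with no saturation assumption needed.

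\emph{Floquet bound.} By Lemma~\ref{lem:liouville} and the triangle inequality,
\[
|\ln\det M_\gamma|\le\int_0^T|\Tr(Df_\theta(\gamma(t)))|\,dt .
\]
At each $t$ we bound the integrand with Lemma~\ref{lem:rank_trace}, which is applicable because $\mathrm{rank}\,Df_\theta(\gamma(t))\le r$; this gives $|\Tr(Df_\theta(\gamma(t)))|\le r\,\norm{Df_\theta(\gamma(t))}$. Since $\gamma(t)\in U$, Theorem~\ref{thm:main}(i) gives $\norm{Df_\theta(\gamma(t))}\le C(U)$. Integrating over $[0,T]$ yields \eqref{eq:floquet_bottleneck}.

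\emph{Improvement factor.} When $r<d$, the bound $r\,C(U)\,T$ equals $(r/d)$ times the bound $d\,C(U)\,T$ of \eqref{eq:floquet_bound}, which is the claimed improvement by a factor $d/r$. If instead $r\ge d$, the rank is trivially at most $d$ and the new bound simply coincides with the old one.

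\emph{Main obstacle.} Nothing here is deep. The only points that need care are correctly indexing the split of the product at the bottleneck layer, and confirming that Lemma~\ref{lem:rank_trace} covers real matrices whose eigenvalues may be complex. The lemma's proof already handles the complex case by working over $\mathbb{C}^d$.
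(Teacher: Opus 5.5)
Your proposal is correct and follows the paper's proof essentially line for line. You prove $\mathrm{rank}(Df_\theta(x))\le r$ from the factorisation of Proposition~\ref{prop:jacobian}, then rerun the proof of Theorem~\ref{thm:liouville}(i) with Lemma~\ref{lem:rank_trace} in place of Lemma~\ref{lem:trace}. Your split at the bottleneck layer into a $d\times r$ and an $r\times d$ factor justifies the rank bound more explicitly than the paper's one-line appeal to rank submultiplicativity. One small aside: for $r>d$ the bound $r\,C(U)\,T$ is weaker than $d\,C(U)\,T$, not equal to it. It is $\min(r,d)\,C(U)\,T$ that coincides with the old bound, and this is harmless because the statement only claims an improvement when $r<d$.
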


\begin{proof}
The Jacobian $Df_\theta(x)=W_L D_{L-1}W_{L-1}\cdots D_1 W_1$
is a product containing the factor $W_k\in\R^{d_k\times d_{k-1}}$ for
each $k$.
Submultiplicativity of rank gives
$\mathrm{rank}(Df_\theta(x))\le\min_k d_k=r$.
Replacing Lemma~\ref{lem:trace} by Lemma~\ref{lem:rank_trace} in the
proof of Theorem~\ref{thm:liouville}(i) yields
$\lvert\Tr(Df_\theta(\gamma(t)))\rvert
\le r\,C(U)$,
and integrating over $[0,T]$ gives~\eqref{eq:floquet_bottleneck}.
\end{proof}

\begin{remark}[Comparison of the two main theorems]
\label{rem:indiv_vs_det}
Theorem~\ref{thm:liouville}(i) bounds the logarithm of the
monodromy determinant
($|\ln\det M_\gamma|\le d\,C(U)T$),
controlling the \emph{product} of all multipliers.
Theorem~\ref{thm:individual} bounds \emph{each} multiplier
individually ($|\ln|\mu_i||\le C(U)T$).
Since $|\sum_i\ln|\mu_i||\le(d-1)\,C(U)T$,
Theorem~\ref{thm:individual} also implies
$|\ln\det M_\gamma|\le(d-1)\,C(U)T$,
recovering (and slightly sharpening) Theorem~\ref{thm:liouville}(i).
Theorem~\ref{thm:liouville} retains independent value:
its proof via the Liouville--Abel--Jacobi identity and the trace bound
generalises to the bottleneck setting
(Corollary~\ref{cor:bottleneck}), where the factor~$d$ sharpens to
the bottleneck width~$r$.
The individual bound is qualitatively stronger in a different sense:
for an asymptotically
stable orbit every non-trivial multiplier must lie in the interval
$(e^{-C(U)T},1)$, so the ``stability window'' per multiplier
shrinks to zero as $C(U)T\to 0$.
\end{remark}

\section{Numerical Verification}
\label{sec:numerics}

We illustrate the mechanism and bounds predicted by
the main theorems on the
\emph{Stuart--Landau oscillator}~\cite{Stuart1960,Landau1944},
chosen for its exact closed-form monodromy.
Additional numerical experiments are given in
Appendix~\ref{app:numerics}.

\paragraph{Setup.}
The Stuart--Landau system is
\begin{equation}
  \dot{x} = x - y - x(x^2+y^2),\qquad
  \dot{y} = x + y - y(x^2+y^2),
  \label{eq:sl}
\end{equation}
which possesses an asymptotically stable limit cycle $\gamma$ on the
unit circle with period $T=2\pi$.
Along $\gamma$ the Jacobian trace is identically
$\Tr(Df_{\mathrm{SL}}(\gamma(t)))=-2$,
giving $\ln\det M_\gamma = -4\pi$,
i.e.\ $\det M_\gamma = e^{-4\pi}\approx 3.487\times 10^{-6}$.

We fit a two-layer $\tanh$-MLP Neural ODE
$\dot{h}=f_\theta(h;\,s)$,
with architecture
$f(h;\,s)=W_2\tanh(s(W_1 h+b_1))+b_2$,
to the Stuart--Landau vector field by minimising the mean-squared
residual on sample points in the annulus $0.1\le\norm{h}\le 2$.
The pre-activation scale $s>0$ controls saturation depth: large $s$
drives the hidden units into the flat region of $\tanh$.
Illustrations~A and~C use 32 and 256 hidden units respectively; all runs are seeded for reproducibility (see \emph{Code availability} at the end of the paper).

\paragraph{Illustration~A: Jacobian attenuation (Theorem~\ref{thm:main}).}
For each value of $s$ we compute the spectral norm
$\norm{Df_\theta(h_0;\,s)}_2$ at a fixed evaluation point
$h_0=(0.8,\,0.4)$ and compare against the theoretical bound
$C_W(s)\cdot\delta(h_0,s)
 = s\,\norm{W_1}_2\,\norm{W_2}_2\cdot\max_i\sech^2(s\,a_i)$
(Theorem~\ref{thm:main}, single hidden layer, $q=1$).

\begin{figure}[ht]
  \centering
  \includegraphics[width=0.80\linewidth]{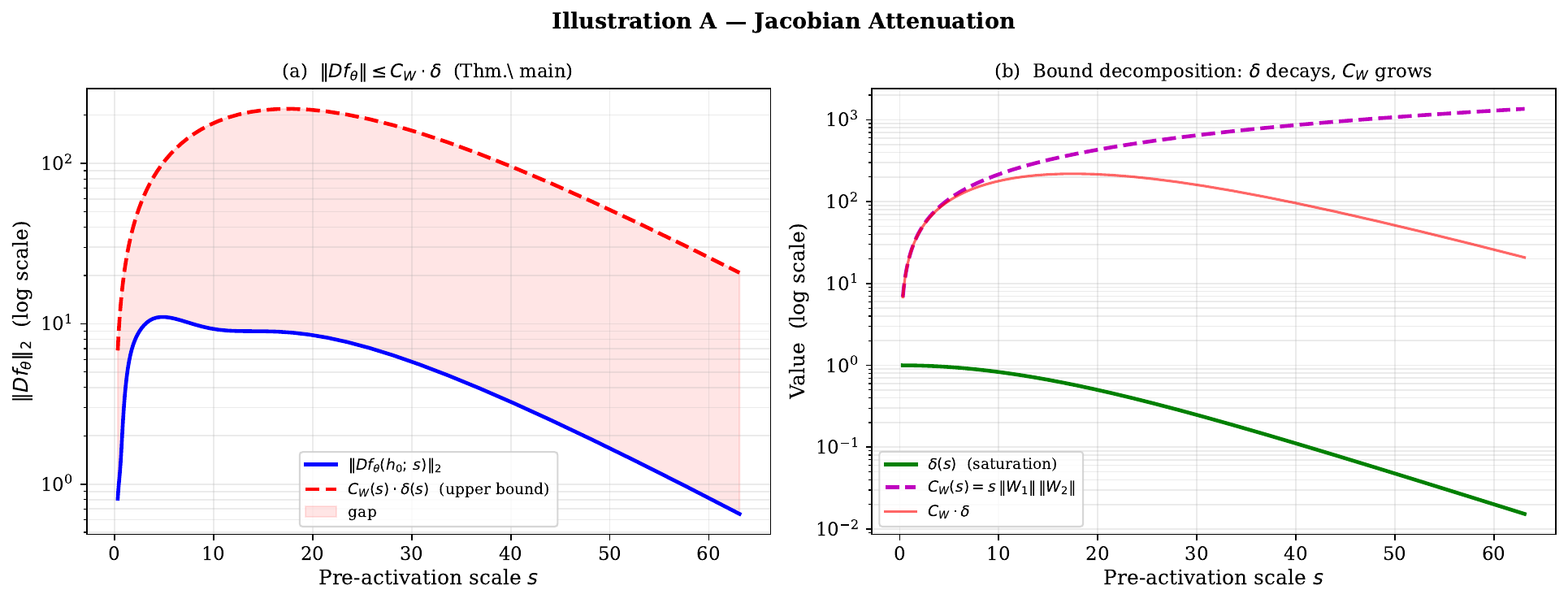}
  \caption{%
    \textbf{Jacobian attenuation (Theorem~\ref{thm:main}).}
    \emph{(a)}~Spectral norm $\norm{Df_\theta}_2$ (solid blue) and
    upper bound $C_W(s)\cdot\delta(s)$ (dashed red) as functions
    of the pre-activation scale~$s$.
    The shaded region is the gap between the bound and the true norm;
    the bound is satisfied for all~$s$.
    \emph{(b)}~Decomposition of the bound: the weight factor
    $C_W(s)=s\,\norm{W_1}_2\,\norm{W_2}_2$ grows linearly in~$s$,
    but the saturation factor $\delta(s)$ decays exponentially,
    so their product still vanishes.}
  \label{fig:exp_A}
\end{figure}

Figure~\ref{fig:exp_A} confirms that both the Jacobian norm
and the bound decay to zero as $s\to\infty$.

\paragraph{Illustration~C: Phase portraits and Floquet spectrum collapse
(diagnostic evaluation of the bounds underlying
Theorems~\ref{thm:liouville}--\ref{thm:individual}).}
We illustrate the dynamical consequence of Jacobian attenuation
in a controlled setting where the saturation hypothesis holds
by construction.
We impose a non-trainable bias offset $c=2.5$ on every
pre-activation during training and constrain
$\lVert W_{1,j\cdot}\rVert_2 \le c - 0.5 = 2.0$,
ensuring all pre-activations remain strictly positive on the orbit
(see Appendix~\ref{app:numerics} for the complete protocol).
For each value of~$s$ we also evaluate the Floquet--Liouville integral
$\ln\det M_\gamma(s)
=\int_0^T \Tr\!\bigl(Df_\theta(\gamma(t);\,s)\bigr)\,dt$
numerically along the orbit.

\begin{figure}[ht]
  \centering
  \includegraphics[width=\linewidth]{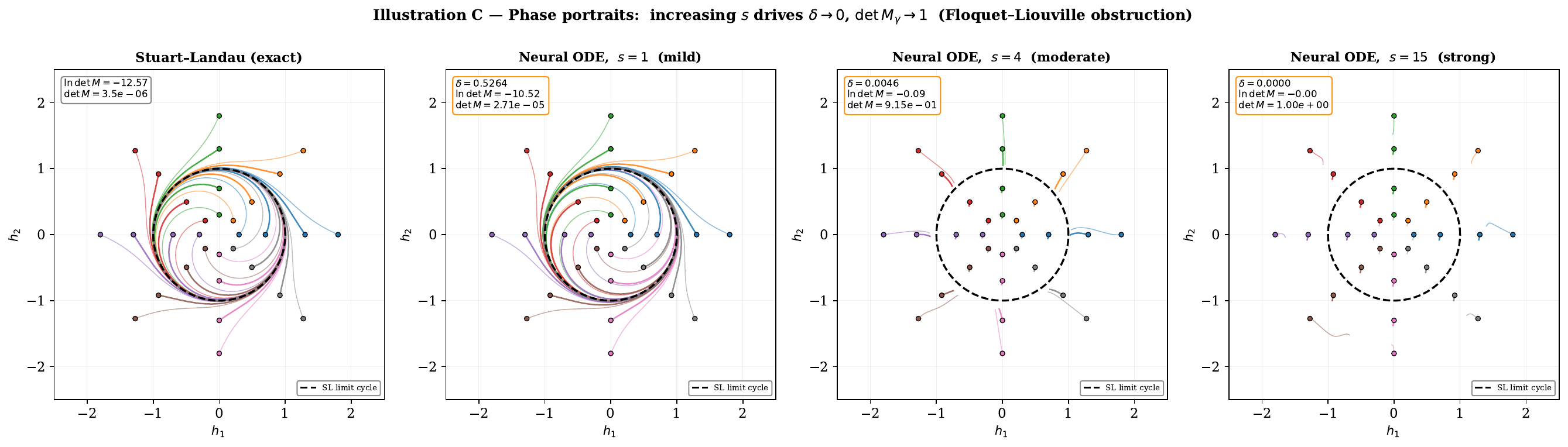}
  \caption{%
    \textbf{Phase portraits: exact Stuart--Landau vs.\ Neural ODE
    at increasing saturation.}
    \emph{Left:} exact Stuart--Landau flow
    ($\ln\det M=-4\pi$, strongly attracting limit cycle).
    \emph{Panels~2--4:} bias-shifted 256-unit MLP at $s=1,4,15$.
    Dashed circle: Stuart--Landau limit cycle ($r=1$).
    Annotations show $\delta$, $\ln\det M$, and $\det M$.
    At $s=1$ ($\delta\approx 0.53$, $\det M\approx 10^{-5}$) the MLP
    reproduces dissipative spirals; at $s=4$ ($\delta\approx 0.005$,
    $\det M\approx 0.92$) convergence weakens visibly;
    at $s=15$ ($\delta\approx 0$, $\det M=1$) trajectories are
    nearly rectilinear and the limit cycle no longer attracts.}
  \label{fig:exp_C}
\end{figure}

As $\delta\to 0$ (Figure~\ref{fig:exp_C}),
the obstruction forces $\det M\to 1$:
no volume-contracting periodic orbit survives in the saturated regime.

\section{Refined Jacobian Bounds via Saturation-Weighted Spectral Factorisation}
\label{sec:refined}

For activations with $\sigma'(a)>0$
(e.g.\ $\tanh$, sigmoid, softplus),
each $D_k(x)$ is positive definite
and admits a square root $D_k^{1/2}$.
Redistributing $D_k^{1/2}$ to adjacent weight matrices
before applying submultiplicativity yields a tighter bound
$\widetilde{C}(U)\le C(U)$.

\begin{theorem}[Refined bound --- general depth]\label{thm:multi}
Let $f_\theta$ be an $L$-layer MLP~\eqref{eq:mlp}
with $\sigma'>0$, and assume $q$ layers are $\delta$-saturated
on~$U$ (Definition~\textup{\ref{def:saturation}}).
\begin{enumerate}[label=\textup{(\roman*)}]
\item \emph{Pointwise refined bound.}
For every $x\in U$,
\begin{equation}\label{eq:multi}
  \norm{Df_\theta(x)}
  \;\le\;
  \norm{W_L\,D_{L-1}^{1/2}}
  \;\cdot\;
  \prod_{k=2}^{L-1}
    \norm{D_k^{1/2}\,W_k\,D_{k-1}^{1/2}}
  \;\cdot\;
  \norm{D_1^{1/2}\,W_1},
\end{equation}
where all $D_k=D_k(x)$.

\item \emph{Uniform refined bound.}
Define
\begin{equation}\label{eq:Ctilde}
  \widetilde{C}(U)
  \;:=\;
  \sup_{x\in U}\;
  \norm{W_L\,D_{L-1}(x)^{1/2}}
  \;\cdot\;
  \prod_{k=2}^{L-1}
    \norm{D_k(x)^{1/2}\,W_k\,D_{k-1}(x)^{1/2}}
  \;\cdot\;
  \norm{D_1(x)^{1/2}\,W_1}.
\end{equation}
Then
\begin{equation}\label{eq:chain}
  \norm{Df_\theta(x)}
  \;\le\;
  \widetilde{C}(U)
  \;\le\;
  C(U).
\end{equation}
The improvement ratio $\rho:=C(U)/\widetilde{C}(U)\ge 1$
is governed entirely by the $q$ saturated layers.
\end{enumerate}
\end{theorem}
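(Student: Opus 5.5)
Since $\sigma'>0$, each $D_k(x)$ is a diagonal matrix with strictly positive entries. It therefore has a well-defined diagonal positive square root $D_k^{1/2}$, with $D_k=D_k^{1/2}D_k^{1/2}$ and $\norm{D_k^{1/2}}=\norm{D_k}^{1/2}$. My plan is to prove part (i) by regrouping the factorisation of Proposition~\ref{prop:jacobian}. I write
\[
Df_\theta(x)=W_L D_{L-1}^{1/2}\,\bigl(D_{L-1}^{1/2}W_{L-1}D_{L-2}^{1/2}\bigr)\cdots\bigl(D_2^{1/2}W_2D_1^{1/2}\bigr)\,D_1^{1/2}W_1 .
\]
This is an exact identity: inserting $D_k=D_k^{1/2}D_k^{1/2}$ and re-bracketing by associativity reproduces \eqref{eq:jacobian}. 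Applying submultiplicativity of the operator norm to this $L$-factor product then gives \eqref{eq:multi} pointwise. For $L=2$ the middle product is empty, and the bound reads $\norm{W_2D_1^{1/2}}\,\norm{D_1^{1/2}W_1}$.

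For part (ii), the first inequality in \eqref{eq:chain} follows directly. For $x\in U$ the right-hand side of \eqref{eq:multi} is at most its supremum over $U$, which is $\widetilde C(U)$ by \eqref{eq:Ctilde}.

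For the second inequality I will bound each grouped factor crudely, using submultiplicativity again:
\[
\norm{D_k^{1/2}W_kD_{k-1}^{1/2}}\le \norm{D_k}^{1/2}\norm{W_k}\norm{D_{k-1}}^{1/2},\qquad
\norm{W_LD_{L-1}^{1/2}}\le\norm{W_L}\norm{D_{L-1}}^{1/2},\qquad
\norm{D_1^{1/2}W_1}\le\norm{D_1}^{1/2}\norm{W_1}.
\]
Multiplying these, each hidden layer $k$ appears exactly twice, once on each side of its two adjacent weight matrices. The half-powers therefore recombine to $\norm{D_k(x)}$, and the product is bounded by $C_W\prod_{k=1}^{L-1}\norm{D_k(x)}$. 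By the argument of Theorem~\ref{thm:main}(i), this is at most $C_W\delta^q\prod_{k\notin\mathcal K}M_k(U)=C(U)$ for every $x\in U$. Because this bound is uniform in $x$, taking the supremum over $x\in U$ gives $\widetilde C(U)\le C(U)$, and hence $\rho\ge1$.

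The only step needing care is the informal final sentence about $\rho$ being governed entirely by the saturated layers. I plan to read it as a qualitative remark: the loss in each crude bound $\norm{D^{1/2}_kW_kD^{1/2}_{k-1}}\le\norm{D_k}^{1/2}\norm{W_k}\norm{D_{k-1}}^{1/2}$ comes from the non-uniformity of the diagonal weights. That non-uniformity is where saturation acts, since saturated units carry small entries of $D_k$ that the weighted norm can exploit. I do not intend to prove this as a quantitative claim, only to justify it through this interpretation. The rigorous content is the two inequalities above, and I expect no real obstacle beyond bookkeeping the square roots at the boundary factors $k=1$ and $k=L-1$.
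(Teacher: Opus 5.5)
Your proposal is correct and follows the paper's proof essentially step for step. You split each $D_k=D_k^{1/2}D_k^{1/2}$, regroup, and apply submultiplicativity for (i); for (ii) you bound each grouped factor crudely so the half-powers recombine into $C_W\prod_k\norm{D_k(x)}\le C(U)$, then take the supremum over $U$, and like the paper you treat the closing remark about $\rho$ as interpretive rather than something requiring proof.
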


\begin{proof}
\textit{(i)}
Factor each $D_k=D_k^{1/2}\,D_k^{1/2}$ in~\eqref{eq:jacobian}
and regroup adjacent terms:
\begin{align*}
  Df_\theta(x)
  &= W_L\,D_{L-1}\,W_{L-1}\;\cdots\;D_1\,W_1 \\
  &= \underbrace{W_L\,D_{L-1}^{1/2}}_{\text{left edge}}\;\cdot\;
     \underbrace{D_{L-1}^{1/2}\,W_{L-1}\,D_{L-2}^{1/2}}_{\text{interior}}\;\cdot\;
     \;\cdots\;\cdot\;
     \underbrace{D_2^{1/2}\,W_2\,D_1^{1/2}}_{\text{interior}}\;\cdot\;
     \underbrace{D_1^{1/2}\,W_1}_{\text{right edge}}.
\end{align*}
Apply submultiplicativity to obtain~\eqref{eq:multi}.

\textit{(ii)}
The first inequality in~\eqref{eq:chain} follows from~(i).
For the second, apply submultiplicativity to each grouped factor
to separate weights and activations:
each $\norm{D_k}$ contributes $\norm{D_k}^{1/2}$ from two adjacent factors,
yielding $\norm{D_k}$.
Taking the supremum over~$U$ gives
$\widetilde{C}(U)\le C_W\prod_{k=1}^{L-1}M_k(U)$.
Separating the $q$ saturated layers:
$\prod_{k\in\mathcal{K}}M_k(U)\le\delta^q$,
hence $\widetilde{C}(U)\le C(U)$.
\end{proof}

\begin{proposition}[Attainability]\label{prop:sharp}
For any positive diagonal $D\in\R^{n\times n}$
there exist $W_1\in\R^{n\times d}$, $W_2\in\R^{d\times n}$
such that equality holds in the single-layer version
of~\eqref{eq:multi}:
$\norm{W_2\,D\,W_1}
=\norm{W_2\,D^{1/2}}\cdot\norm{D^{1/2}\,W_1}$.
Thus the refined bound is sharp.
\end{proposition}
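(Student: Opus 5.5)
The plan is to exhibit an explicit rank-one construction in which the top singular directions align through the middle factor $D$. Write $D=\diag(d_1,\ldots,d_n)$ with all $d_i>0$. Fix a unit vector $u\in\R^n$, a unit vector $e\in\R^d$, and set
\[
  W_1 := D^{-1/2}u\,e^\top\in\R^{n\times d},\qquad
  W_2 := e\,u^\top D^{-1/2}\in\R^{d\times n}.
\]
Then $D^{1/2}W_1 = u\,e^\top$ and $W_2D^{1/2} = e\,u^\top$. Both are rank one with spectral norm $\norm{u}\norm{e}=1$, so the right-hand side equals $1$. On the left, $W_2DW_1 = e\,u^\top D^{-1/2}DD^{-1/2}u\,e^\top = (u^\top u)\,e\,e^\top = e\,e^\top$, which has norm $1$. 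Equality therefore holds.

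As a more conceptual alternative that avoids inverting $D$, write $A:=W_2D^{1/2}$ and $B:=D^{1/2}W_1$. Equality $\norm{AB}=\norm{A}\norm{B}$ holds precisely when a top right singular vector of $A$ coincides with a top left singular vector of $B$, up to the image of the maximising input. The rank-one choice above realises exactly this alignment, with $u$ serving as both singular vectors. I would mention this as the underlying reason and keep the explicit construction as the proof.

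There are only two things to check. First, the existence of $D^{-1/2}$, which uses positivity of $D$; this is the hypothesis $\sigma'>0$ in the setting of Theorem~\ref{thm:multi}. Second, that the dimensions are consistent: $W_1\in\R^{n\times d}$ and $W_2\in\R^{d\times n}$. Neither is a real obstacle.

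One subtle point is worth a sentence: the attainability shown here is for \emph{some} weights. It does not show that the inequality in \eqref{eq:multi} is tight for the weights a network actually learns. The statement only claims the former, so this is all that is required.
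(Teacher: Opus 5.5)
Your proof is correct and is the paper's own construction. The paper takes $W_1=D^{-1/2}v\,e_1^\top$ and $W_2=e_1v^\top D^{-1/2}$, so that $D^{1/2}W_1=v\,e_1^\top$, $W_2D^{1/2}=e_1v^\top$ and $W_2DW_1=e_1e_1^\top$ all have spectral norm $1$; using a general unit vector $e$ in place of $e_1$ is an inessential variation, and your loosely worded singular-vector remark is not needed for the argument.
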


\begin{proof}
Given any unit vector $v\in\R^n$, choose
$W_1=D^{-1/2}\,v\,e_1^\top$ and
$W_2=e_1\,v^\top\,D^{-1/2}$.
Then $B=D^{1/2}W_1=v\,e_1^\top$
and $A=W_2 D^{1/2}=e_1\,v^\top$,
so $\norm{AB}=1=\norm{A}\,\norm{B}$.
\end{proof}

The improvement ratio $\rho:=C(U)/\widetilde{C}(U)$
is amplified to $e^{(\rho-1)\widetilde{C}(U)T}$
at the flow level via Gr\"onwall's inequality
(Corollary~\ref{cor:exp_amp} in Appendix~\ref{app:auxiliary}),
and leads to a refined Floquet obstruction
$e^{-d\widetilde{C}(U)T}\le\det M_\gamma\le e^{d\widetilde{C}(U)T}$
(Corollary~\ref{cor:floquet_refined}).

\section{Discussion}
\label{sec:discussion}

\subsection*{The training--inference gap}

The obstruction is invisible to training diagnostics:
$\mathcal{L}(\theta)$ can be small while the network cannot sustain
the target dynamics under saturation
(Illustration~C: MSE~$\approx 0.001$ at $s=1$,
$\det M_\gamma=1$ at $s=15$).
Batch normalisation~\cite{IoffeS2015} does not resolve this gap:
BN keeps normalised pre-activations near zero during training,
mitigating vanishing gradients in $\nabla_\theta\mathcal{L}$,
but at inference the relevant quantity is
$|\sigma'(a_{k,i}(h(t)))|$ along the target trajectory ---
a different evaluation point that BN does not control.

\subsection*{Application to the Morris--Lecar model}

{\tolerance=800\emergencystretch=1em
The Morris--Lecar (ML) model~\cite{MorrisLecar1981}
exhibits Hopf, SNLC, and homoclinic bifurcation
regimes~\cite{GuckenheimerHolmes1983}.
Empirically, a $\tanh$-NODE fails in all three while
$\mathrm{SiLU}$ succeeds~\cite{MatzakosSfyrakis2026}.
Our bounds provide a theoretical explanation:
ML voltage amplitudes ($30$--$40\,\mathrm{mV}$) can drive
pre-activations into the flat region of $\tanh$,
yielding deep saturation ($\delta\ll 1$).
Theorem~\ref{thm:main}(i) then gives
$\norm{Df_\theta}\le C_W\delta^q\ll\lambda_0$,
too small to sustain the required instability
for homoclinic orbits or strong enough contraction
for asymptotically stable limit cycles.
SiLU avoids positive-side saturation
($\mathrm{SiLU}'(x)\to 1$ as $x\to+\infty$),
so the obstruction disappears.\par}

\begin{remark}[Homoclinic orbits]\label{rem:homoclinic}
For a homoclinic orbit with saddle $x^*$,
$T\to\infty$ makes the Floquet bound vacuous.
A weaker necessary condition is
$\norm{Df_\theta(x^*)}\ge\lambda_0$,
where $\lambda_0:=\min_i|\mathrm{Re}(\lambda_i)|>0$
for the target saddle linearisation.
Theorem~\ref{thm:main}(i) gives $C_W\delta^q<\lambda_0$
under strong saturation, contradicting this condition.
A full homoclinic theory remains open.
\end{remark}

\subsection*{Limitations, scope, and extensions}

The bound is sharp in~$\delta$ (Remark~\ref{rem:tightness});
the refined bound of Section~\ref{sec:refined}
closes the $C_W$ gap to ${\sim}5\%$
under strong saturation (Appendix~\ref{app:numerics}).
Key limitations:
(i)~$T<\infty$ excludes homoclinic orbits (Remark~\ref{rem:homoclinic});
(ii)~zero crossings of pre-activations yield $\delta=1$;
(iii)~the refined factorisation requires $\sigma'>0$.
The obstruction requires both autonomy and saturation:
non-saturating activations (SiLU, ReLU, ELU) avoid it
(Corollary~\ref{cor:comparison}),
SA-NODEs~\cite{LiLiuLiveraniZuazua2026}
admit universal approximation results that bypass
the autonomous saturation obstruction studied here,
and augmented NODEs~\cite{Dupont2019AugmentedNODE}
loosen but do not eliminate the bound.
The Jacobian attenuation mechanism
(Theorem~\ref{thm:main}) extends beyond ODEs:
discrete residual networks heuristically have
one-step Jacobian $\approx I$ under saturation, and in FFJORD~\cite{Grathwohl2019FFJORD}
saturation drives $\Tr(Df_\theta)\to 0$.

\medskip\noindent\textbf{Relation to structure-preserving numerical methods.}
The obstruction identified in this paper is conceptually distinct from the limitations
studied in the theory of geometric and symplectic
integrators~\cite{Leimkuhler2004,HairerLubichWanner2006}.
Structure-preserving discretisations are designed to maintain geometric invariants or
qualitative properties of a \emph{given, known} dynamical system under numerical
time-stepping.
The present obstruction, by contrast, is a \emph{representability constraint}: it
concerns the class of learned autonomous vector fields $\{f_\theta\}$ and restricts what
dynamics the parametrised model can encode, independently of how the resulting ODE is
subsequently integrated.
A Neural ODE may fail to reproduce strongly stable periodic dynamics even if the
numerical solver integrates the learned vector field exactly, because the learned
continuous vector field $f_\theta$ itself cannot sustain the required Floquet structure.
The two analyses are therefore complementary: geometric integrators address how
faithfully a \emph{known} dynamics is propagated in time; the present work addresses how
faithfully a \emph{learned} vector field can represent the structural properties of the
target system.

\section{Conclusion}
\label{sec:conclusion}

This paper identifies and quantifies a structural inference-time limitation of autonomous
Neural ODEs with saturating activations.
The core mechanism is Jacobian attenuation (Theorem~\ref{thm:main}): if $q$ hidden
layers are $\delta$-saturated on a convex region~$U$, the operator norm of the learned
Jacobian satisfies
\[
  \norm{Df_\theta(x)} \le C(U)
  := C_W\cdot\delta^q\cdot\prod_{k\notin\mathcal{K}}M_k(U)
\]
throughout~$U$ (which reduces to $C_W\delta^q$ for $\sigma=\tanh$ since $\Lambda_\sigma\le 1$),
a quantity that vanishes as $\delta \to 0$.
Via the Liouville--Abel--Jacobi identity, this translates into a dynamical constraint
(Theorems~\ref{thm:liouville} and~\ref{thm:individual}): every Floquet exponent $\lambda_i$ is confined to the symmetric
interval $[-C(U),\,C(U)]$, while every Floquet multiplier $\mu_i$
lies in the multiplicative band $[e^{-C(U)T},\,e^{C(U)T}]$.

The dynamical consequence is substantial.
Under deep saturation, the Floquet spectrum collapses toward the neutral point: no
periodic orbit in the saturated region can exhibit strong orbital attraction or strong
instability.
This constitutes a \emph{structural bottleneck in the dynamical expressivity} of the
model --- not in the approximation-theoretic sense, but in the sense that the achievable
contraction and expansion rates of periodic flows are quantitatively bounded by the
saturation depth.

Crucially, this is not a training pathology.
The obstruction is a property of the \emph{learned vector field at inference time},
independent of the training loss or optimisation quality.
A model that achieves near-zero MSE on trajectory data may nonetheless produce dynamics
that are neutrally stable or structurally incorrect, because the saturation constraint
prevents the Jacobian from attaining the values required for strong orbital stability.
This training--inference gap is invisible to standard diagnostics and persists even when
batch normalisation is employed during training.

It is important to be precise about what this result does not claim.
The obstruction does not exclude the existence of periodic orbits within the saturated
region --- it restricts their dynamical character.
It does not contradict universal approximation theorems, which concern pointwise function
approximation and not dynamical behaviour.
The limitation is localised: it applies only within the saturated region~$U$, and
trajectories that traverse unsaturated parts of the state space are not governed by
these bounds.
For non-saturating activations such as SiLU, $\delta$ is bounded away from zero and
the obstruction disappears (Corollary~\ref{cor:comparison}).

The analysis points to a concrete direction for remediation.
Since the obstruction is architectural in nature, it is not removable in general by
optimisation alone unless training explicitly prevents saturation or changes the
activation regime:
it points toward either the adoption of non-saturating activations, or a training
procedure that embeds dynamical structure as an inductive bias --- for instance by penalising violations of the
trace condition for orbital stability directly during training.
We identify the design of such Floquet-aware training objectives as a natural
direction for future work.

\subsection*{Code availability}

All scripts reproducing the figures are available at
\href{https://github.com/nikmatz/Activation-Saturation-and-Floquet-Spectrum-Collapse-in-Neural-ODEs}{\texttt{https://github.com/nikmatz/Activation\discretionary{-}{}{-}Saturation\discretionary{-}{}{-}and\discretionary{-}{}{-}Floquet\discretionary{-}{}{-}Spectrum\discretionary{-}{}{-}Collapse\discretionary{-}{}{-}in\discretionary{-}{}{-}Neural\discretionary{-}{}{-}ODEs}}.

\subsection*{Acknowledgements}

The author thanks Professor Enrique Zuazua for his hospitality during a research
visit at the Chair of Dynamics, Control, Machine Learning and Numerics,
Friedrich-Alexander-Universit\"at Erlangen--N\"urnberg (FAU), and for
helpful discussions.
The author also thanks Dr.\ Lorenzo Liverani for helpful discussions
and feedback.
This work was carried out during a sabbatical leave from the
School of Pedagogical \&\ Technological Education (ASPETE), Greece.

\appendix

\section{Auxiliary Results}
\label{app:auxiliary}

This appendix collects auxiliary corollaries of
Theorem~\ref{thm:main} that are not needed for the
main narrative but may be of independent interest.

\begin{proposition}[Global Lipschitz bound; classical, cf.\ {\cite{Chen2018NeuralODE}}]
\label{prop:global}
If $\Lambda_\sigma:=\sup_{x\in\R}|\sigma'(x)|<\infty$, then
$f_\theta$ is globally Lipschitz with constant
$L_f \le C_W\Lambda_\sigma^{L-1}$,
and solutions of~\eqref{eq:node} satisfy
\begin{equation}
  \norm{h(t)-\tilde{h}(t)}\le e^{L_f(t-t_0)}\norm{h_0-\tilde{h}_0}.
  \label{eq:global_sens}
\end{equation}
\end{proposition}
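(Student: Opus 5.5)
The plan is to reuse the Jacobian factorisation of Proposition~\ref{prop:jacobian} together with the submultiplicative estimate~\eqref{eq:jac_bound}, replacing each activation factor by its global supremum $\Lambda_\sigma$ rather than a local one. The Lipschitz statement then follows from a mean-value argument on the whole space, and the sensitivity estimate~\eqref{eq:global_sens} follows from Lemma~\ref{lem:mvt} or, equivalently, a Gr\"onwall argument. No saturation hypothesis is needed; this is the case $U=\R^d$, $\mathcal{K}=\emptyset$ of Theorem~\ref{thm:main}.

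The first step is the uniform Jacobian bound. Fix $x\in\R^d$. By~\eqref{eq:Dk_norm}, $\norm{D_k(x)}=\max_i|\sigma'(a_{k,i}(x))|\le\Lambda_\sigma$ for every $k=1,\ldots,L-1$. Inserting this into~\eqref{eq:jac_bound} gives
\[
  \norm{Df_\theta(x)}\le C_W\Lambda_\sigma^{L-1}=:L_f \qquad\text{for all } x\in\R^d.
\]
Equivalently, we apply Theorem~\ref{thm:main}(i) with $U=\R^d$, which is convex, and $\mathcal{K}=\emptyset$, so that $q=0$ and $M_k(\R^d)\le\Lambda_\sigma$.

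The second step is the Lipschitz bound. Since $\sigma\in C^1$, $f_\theta$ is $C^1$. For $x,y\in\R^d$, write
\[
  f_\theta(x)-f_\theta(y)=\int_0^1 Df_\theta\bigl(y+s(x-y)\bigr)(x-y)\,ds,
\]
which holds because the segment joining $y$ and $x$ lies in the convex set $\R^d$. Taking norms gives $\norm{f_\theta(x)-f_\theta(y)}\le L_f\norm{x-y}$. A useful consequence is that solutions exist globally by Picard--Lindel\"of, so $h(t)$ and $\tilde h(t)$ are defined for all $t\ge t_0$.

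The third step is the sensitivity estimate. Both trajectories trivially remain in the convex set $U=\R^d$, so Lemma~\ref{lem:mvt} gives~\eqref{eq:global_sens} with exponent $\int_{t_0}^t L_f\,d\tau=L_f(t-t_0)$. To avoid relying on that lemma, I would instead set $e(t)=h(t)-\tilde h(t)$. Then $\frac{d}{dt}\norm{e}\le\norm{\dot e}\le L_f\norm{e}$ wherever $e\ne0$; where $e$ vanishes, uniqueness makes $e\equiv0$ from that time on. Lemma~\ref{lem:gronwall} then completes the argument.

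None of the steps is a real obstacle. The only delicate point is the differentiability of $\norm{e(t)}$ in the Gr\"onwall step. I would handle it by applying Gr\"onwall to $y=\norm{e}^2$, which satisfies $y'\le 2L_f y$ and gives the same bound after taking square roots.
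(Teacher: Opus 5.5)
Your proposal is correct and matches the paper's intended argument. The paper labels the result classical and gives no separate proof, but the intended argument is exactly \eqref{eq:jac_bound} with every $\norm{D_k(x)}\le\Lambda_\sigma$ (Theorem~\ref{thm:main} with $U=\R^d$, $\mathcal{K}=\emptyset$), followed by the segment argument of Corollary~\ref{cor:lipschitz} and Lemma~\ref{lem:mvt} (or Gr\"onwall applied to $\norm{e}^2$, which you handle correctly) for \eqref{eq:global_sens}.
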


\begin{corollary}[Simplified bound for bounded activations]
\label{cor:simplified}
If $\Lambda_\sigma<\infty$, then for all $x\in U$:
$\norm{Df_\theta(x)} \le C_W\,\Lambda_\sigma^{L-1-q}\,\delta^q$.
\end{corollary}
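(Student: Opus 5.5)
The bound comes directly from the Jacobian factorisation of Proposition~\ref{prop:jacobian} by splitting the activation factors into saturated and unsaturated layers. Fix $x\in U$. By~\eqref{eq:jacobian} and submultiplicativity of the operator norm, as in~\eqref{eq:jac_bound},
\[
  \norm{Df_\theta(x)}\le C_W\prod_{k=1}^{L-1}\norm{D_k(x)},
\]
and by~\eqref{eq:Dk_norm} each factor equals $\max_i|\sigma'(a_{k,i}(x))|$. The plan is to bound each factor according to whether $k\in\mathcal{K}$.

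For $k\in\mathcal{K}$, the saturation hypothesis~\eqref{eq:saturation} gives $\norm{D_k(x)}\le\delta$. There are exactly $q$ such layers, so together they contribute at most $\delta^q$.

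For the remaining $L-1-q$ layers $k\notin\mathcal{K}$, no saturation information is available. We therefore use the global bound $\norm{D_k(x)}=\max_i|\sigma'(a_{k,i}(x))|\le\sup_{z\in\R}|\sigma'(z)|=\Lambda_\sigma$, which is finite by assumption. These layers contribute at most $\Lambda_\sigma^{L-1-q}$. Multiplying the two groups gives
\[
  \norm{Df_\theta(x)}\le C_W\,\Lambda_\sigma^{L-1-q}\,\delta^q,
\]
which is the claimed bound.

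An equivalent route goes through Theorem~\ref{thm:main}(i). One notes that $M_k(U)\le\Lambda_\sigma$ for every $k\notin\mathcal{K}$, so $C(U)\le C_W\delta^q\Lambda_\sigma^{L-1-q}$, and the bound follows.

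There is no real obstacle here. The only points to check are that all factors are nonnegative, so that replacing each factor by a larger bound preserves the inequality, and that the number of unsaturated layers is exactly $L-1-|\mathcal{K}|=L-1-q$. When $\Lambda_\sigma\le 1$ the bound reduces to Theorem~\ref{thm:main}(iii).
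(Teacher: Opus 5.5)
Your proof is correct and matches the paper's intended argument. The paper gives no separate proof and lists the corollary as a direct consequence of Theorem~\ref{thm:main}; it follows exactly as part~(iii) does, by bounding $M_k(U)\le\Lambda_\sigma$ for each of the $L-1-q$ unsaturated layers in~$C(U)$, and your direct route through~\eqref{eq:jac_bound} is the same computation.
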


\begin{corollary}[Localized Lipschitz constant]
\label{cor:lipschitz}
Under the hypotheses of Theorem~\ref{thm:main}:
$\mathrm{Lip}(f_\theta;\allowbreak U)\le C(U)$.
\end{corollary}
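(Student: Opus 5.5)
The statement is Corollary~\ref{cor:lipschitz}: $\mathrm{Lip}(f_\theta;U)\le C(U)$. The proof combines the pointwise Jacobian bound of Theorem~\ref{thm:main}(i) with the convexity of $U$, via the fundamental theorem of calculus along line segments.

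First, fix arbitrary $x,y\in U$. Since $U$ is convex, the segment $\xi(s):=y+s(x-y)$, $s\in[0,1]$, lies entirely in $U$. Because $\sigma\in C^1$, the map $f_\theta$ is $C^1$ (a composition of affine maps and componentwise $C^1$ maps). Hence $s\mapsto f_\theta(\xi(s))$ is $C^1$ on $[0,1]$, and the chain rule gives
\[
  f_\theta(x)-f_\theta(y)=\int_0^1 Df_\theta(\xi(s))\,(x-y)\,ds .
\]

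Second, take Euclidean norms and move the norm inside the vector-valued integral. Using the definition of the operator norm, this gives
\[
  \norm{f_\theta(x)-f_\theta(y)}\le\int_0^1\norm{Df_\theta(\xi(s))}\,ds\;\norm{x-y}.
\]
Since each $\xi(s)\in U$, Theorem~\ref{thm:main}(i) bounds the integrand by $C(U)$. Therefore $\norm{f_\theta(x)-f_\theta(y)}\le C(U)\norm{x-y}$. Taking the supremum over $x\ne y$ in $U$ yields $\mathrm{Lip}(f_\theta;U)\le C(U)$.

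There is no real obstacle here. The only point needing care is that convexity of $U$ is what keeps the whole segment inside the region where the saturation bound holds. Without it, the segment could leave $U$ and pass through unsaturated regions, where $\norm{Df_\theta}$ is not controlled by $C(U)$. Convexity is built into Definition~\ref{def:saturation}, so the argument goes through as stated.
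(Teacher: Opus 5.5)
Your proposal is correct and follows essentially the same route as the paper's proof: convexity keeps the segment $y+s(x-y)$ inside $U$, the fundamental theorem of calculus writes $f_\theta(x)-f_\theta(y)$ as an integral of $Df_\theta$ along it, and Theorem~\ref{thm:main}(i) bounds the integrand by $C(U)$. You also spell out the $C^1$ regularity of $f_\theta$ and the step of moving the norm inside the integral, which the paper leaves implicit.
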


\begin{proof}
Let $x,y\in U$.  Since $U$ is convex, the segment
$\ell(s):=y+s(x-y)$, $s\in[0,1]$, lies in $U$.  By the
fundamental theorem of calculus:
\[
  \norm{f_\theta(x)-f_\theta(y)}
  = \left\lVert\int_0^1 Df_\theta(\ell(s))\,(x-y)\,ds\right\rVert
  \le C(U)\norm{x-y},
\]
where the last step uses~\eqref{eq:jac_attenuated}.
\end{proof}

\begin{corollary}[Stiffness proxy]
\label{cor:stiffness}
Under the saturation condition~\eqref{eq:saturation}, if the trajectory
$h(\cdot)$ remains in $U$, then
$\Gamma(t):=\int_{t_0}^t\norm{Df_\theta(h(\tau))}\,d\tau
\le C(U)\,(t-t_0)$.
Since step-size restrictions for explicit methods depend on
$\norm{Df_\theta(h(\tau))}$~\cite{HairerNorsettWanner1993},
saturation reduces integration cost.
\end{corollary}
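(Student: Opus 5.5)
The statement to prove is Corollary~\ref{cor:stiffness}, the last one stated. The inequality $\Gamma(t)\le C(U)(t-t_0)$ is a pointwise Jacobian bound integrated in time. Everything reduces to Theorem~\ref{thm:main}(i), which gives $\norm{Df_\theta(x)}\le C(U)$ for every $x\in U$ under the saturation condition~\eqref{eq:saturation}.

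\textbf{Step 1 (pointwise bound).} Fix $\tau\in[t_0,t]$. By the standing assumption the trajectory stays in $U$, so $h(\tau)\in U$. The notational convention of Section~\ref{sec:setting} says that static bounds proved for all $x\in U$ apply along the flow. Hence Theorem~\ref{thm:main}(i) gives $\norm{Df_\theta(h(\tau))}\le C(U)$.

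\textbf{Step 2 (measurability and integration).} We have $\sigma\in C^1$, and by Proposition~\ref{prop:jacobian} $Df_\theta$ is a finite product of continuous matrix functions, so $x\mapsto Df_\theta(x)$ is continuous. The solution $h(\cdot)$ is continuous. The operator norm is continuous. Therefore $\tau\mapsto\norm{Df_\theta(h(\tau))}$ is continuous on the compact interval $[t_0,t]$, and $\Gamma(t)$ is a well-defined Riemann integral. Integrating the bound from Step~1 and using monotonicity of the integral gives $\Gamma(t)\le\int_{t_0}^t C(U)\,d\tau=C(U)(t-t_0)$.

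\textbf{Step 3 (interpretive clause).} The statement that saturation reduces integration cost is not a mathematical claim requiring proof. I would justify it by citing the classical fact from \cite{HairerNorsettWanner1993}: for explicit methods, the stability-limited step size scales inversely with the local Lipschitz constant $\norm{Df_\theta}$. This local constant is itself bounded by $C(U)$, by Corollary~\ref{cor:lipschitz}. Smaller $C(U)$ therefore permits larger stable steps.

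I do not expect a real obstacle. The only point needing care is making explicit that the trajectory lies in $U$ for the whole interval $[t_0,t]$, so that Theorem~\ref{thm:main}(i) applies at every instant, together with the continuity needed for the integral to exist.
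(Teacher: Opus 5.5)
Your proposal is correct and takes the same approach as the paper, which gives no separate proof and treats the corollary as an immediate consequence of Theorem~\ref{thm:main}(i): apply the pointwise bound $\norm{Df_\theta(h(\tau))}\le C(U)$ along the trajectory and integrate over $[t_0,t]$, the same step used in the proof of Theorem~\ref{thm:main}(ii). Your continuity check for the integrand and your treatment of the step-size clause as a heuristic, supported by Corollary~\ref{cor:lipschitz}, are appropriate additions rather than departures.
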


\begin{proposition}[Non-saturating activations admit stable limit cycles]
\label{prop:existence}
Let $g:\R^d\to\R^d$ be $C^1$ with a hyperbolic asymptotically stable
limit cycle $\gamma_0$ of period $T_0$, and let $K$ be a compact
neighbourhood of $\gamma_0$.
Let $\sigma\in C^2(\R)$ be non-polynomial.
Then there exists $\varepsilon^*>0$ such that for every
$\varepsilon\in(0,\varepsilon^*)$ there exist $L$, widths $\{d_\ell\}$,
and weights $\theta$ with
\begin{enumerate}[label=\textup{(\roman*)}]
\item $\norm{f_\theta-g}_{C^1(K)}<\varepsilon$,
\item the Neural ODE $\dot{h}=f_\theta(h)$ has an asymptotically
  stable limit cycle $\gamma_\theta$ with
  $d_H(\gamma_\theta,\gamma_0)<C\varepsilon$,
  where $C>0$ depends only on $(\gamma_0,K)$.
\end{enumerate}
\end{proposition}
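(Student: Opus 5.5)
The proof combines three ingredients: $C^1$ universal approximation, persistence of hyperbolic periodic orbits under $C^1$-small perturbations, and a quantitative estimate on how far the perturbed orbit moves.

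\emph{Step 1: $C^1$ approximation.} Since $\sigma\in C^2(\R)$ is non-polynomial, the classical $C^1$ universal approximation theorem applies. Using the versions of Hornik~\cite{Hornik1990} for derivatives, one hidden layer suffices. Hence for every $\varepsilon>0$ there are a width $d_1$ and weights $\theta$ with $L=2$ such that
\[
\norm{f_\theta-g}_{C^0(K)}+\sup_{x\in K}\norm{Df_\theta(x)-Dg(x)}<\varepsilon .
\]
This gives (i) directly.

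\emph{Step 2: persistence of the limit cycle.} Fix a local cross-section $\Sigma$ transverse to $\gamma_0$ at a point $p_0\in\gamma_0$. Let $P_g$ be the Poincar\'e return map of $g$ on $\Sigma$, defined via the implicit function theorem on the return time. Hyperbolicity and asymptotic stability of $\gamma_0$ mean that the $d-1$ nontrivial Floquet multipliers of $\gamma_0$ lie strictly inside the unit disc. Equivalently, $DP_g(p_0)$ has spectral radius $\rho_0<1$, so $I-DP_g(p_0)$ is invertible. Choose a neighbourhood $V\subset\mathrm{int}\,K$ of $\gamma_0$ and a return-time window. Continuous dependence of flows on the vector field in the $C^1$ topology (Gr\"onwall, Lemma~\ref{lem:gronwall}, applied to the difference of trajectories and of variational equations) then gives $\norm{P_{f_\theta}-P_g}_{C^1(\Sigma\cap B_r(p_0))}\le C_1\varepsilon$. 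By the implicit function theorem applied to $F(p,\phi)=\phi(p)-p$ at $(p_0,P_g)$, there is $\varepsilon^*>0$ such that for $\varepsilon<\varepsilon^*$ the map $P_{f_\theta}$ has a unique fixed point $p_\theta$ near $p_0$. This fixed point satisfies
\[
\norm{p_\theta-p_0}\le \norm{(I-DP_g(p_0))^{-1}}\,C_1\varepsilon\,(1+o(1)).
\]
The fixed point $p_\theta$ yields a periodic orbit $\gamma_\theta$ of $f_\theta$ with period $T_\theta$ close to $T_0$.

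\emph{Step 3: stability and Hausdorff distance.} The spectrum varies continuously in the $C^1$ perturbation, so for $\varepsilon^*$ small, $\rho(DP_{f_\theta}(p_\theta))\le(1+\rho_0)/2<1$. Hence $\gamma_\theta$ is asymptotically stable (indeed hyperbolic). For the Hausdorff bound, compare $\Phi^\theta_t(p_\theta)$ with $\Phi^g_t(p_0)$ on $[0,\max(T_0,T_\theta)]$ using Lemma~\ref{lem:mvt}-type Gr\"onwall estimates together with the vector-field error $\norm{f_\theta-g}_{C^0(K)}<\varepsilon$. This gives
\[
\sup_t\norm{\Phi^\theta_t(p_\theta)-\Phi^g_t(p_0)}\le e^{\mathrm{Lip}(g;K)T}\bigl(\norm{p_\theta-p_0}+T\varepsilon\bigr).
\]
The mismatch $|T_\theta-T_0|=O(\varepsilon)$ contributes a further $O(\varepsilon)\sup_K\norm{g}$. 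All of these constants depend only on $g|_K$ and $\gamma_0$, and in particular not on $\theta$. This yields $d_H(\gamma_\theta,\gamma_0)<C\varepsilon$.

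\emph{Main obstacle.} The delicate point is making $\varepsilon^*$ and $C$ uniform, i.e.\ independent of the network. This requires all perturbation constants to be controlled solely by the $C^1(K)$ distance. It also requires the perturbed trajectories to remain inside $K$, which needs $V$ compactly contained in $K$ and an a priori bound, itself obtained from the Gr\"onwall estimate for small $\varepsilon$. I would handle this by fixing $r$, $V$ and the Lipschitz constant of $g$ on $K$ first, then shrinking $\varepsilon^*$ until the implicit-function ball and the trajectory tubes are guaranteed to stay inside $K$.
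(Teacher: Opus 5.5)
Your proposal is correct and follows the same route as the paper. Part (i) comes from $C^1$ density of one-hidden-layer networks (Hornik), and part (ii) from persistence of the hyperbolic fixed point of the Poincar\'e map under $C^1$-small perturbations via the implicit function theorem; your Gr\"onwall tube argument adds the explicit $O(\varepsilon)$ Hausdorff bound and the uniformity in $\theta$, which the paper only asserts.

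One small correction: for $g$ merely $C^1$, $\norm{P_{f_\theta}-P_g}_{C^1}$ is only $o(1)$ as $\varepsilon\to 0$, because it is governed by the modulus of continuity of $Dg$, so it need not be $\le C_1\varepsilon$. This does not affect the result. Your fixed-point displacement estimate only uses the $C^0$ distance between the Poincar\'e maps, which is $O(\varepsilon)$. Mere $C^1$-smallness is enough for the implicit function theorem and for the spectral-radius argument.
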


\begin{proof}
\textit{(i)}
Since $\sigma$ is non-polynomial and of class $C^2$, multilayer
feedforward networks are dense in
$C^1(K;\R^d)$ in the $C^1$ norm on compact sets
\cite{Hornik1990}.

\textit{(ii)}
The Poincar\'e return map $P_g$ has a hyperbolic fixed point at
$p_0=\gamma_0\cap\Sigma$ ($\Sigma$ a transversal).
By the implicit function theorem and continuous dependence of the
Poincar\'e map on the vector field in the $C^1$
topology~\cite{Chicone2006},
any $f_\theta$ with $\norm{f_\theta-g}_{C^1(K)}<\varepsilon^*$
has an asymptotically stable limit cycle $\gamma_\theta$ with
$d_H(\gamma_\theta,\gamma_0)<C\varepsilon$.
\end{proof}

\medskip

\noindent\textit{Refined bound details.}
The following results provide additional details for
Section~\ref{sec:refined}.

\begin{theorem}[Refined bound --- single layer]\label{thm:single}
Let $f_\theta(x)=W_2\,\sigma(W_1 x+b_1)+b_2$
with $\sigma'>0$.
For every $x$,
\begin{equation}\label{eq:single}
  \norm{Df_\theta(x)}
  \;\le\;
  \norm{W_2\,D(x)^{1/2}}
  \;\cdot\;
  \norm{D(x)^{1/2}\,W_1}
  \;\;\le\;\;
  \norm{W_2}\;\delta(x)\;\norm{W_1},
\end{equation}
so the refined bound is never worse than the original.
\end{theorem}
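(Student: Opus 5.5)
This is the $L=2$ case of Theorem~\ref{thm:multi}, and I will prove it directly by the same splitting. Write $D=D(x)=\diag(\sigma'(a_{1,i}(x)))$, with $a_1(x)=W_1x+b_1$. By Proposition~\ref{prop:jacobian} with $L=2$ we have $Df_\theta(x)=W_2\,D\,W_1$.

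Because $\sigma'>0$, every diagonal entry of $D$ is strictly positive. So $D^{1/2}=\diag(\sigma'(a_{1,i})^{1/2})$ is a well-defined real positive diagonal matrix, and $D=D^{1/2}D^{1/2}$.

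\emph{First inequality.} Regroup the Jacobian as
\[
Df_\theta(x)=(W_2D^{1/2})(D^{1/2}W_1),
\]
and apply submultiplicativity of the operator norm. This gives $\norm{Df_\theta(x)}\le\norm{W_2D^{1/2}}\,\norm{D^{1/2}W_1}$.

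\emph{Second inequality.} Apply submultiplicativity once more to each factor: $\norm{W_2D^{1/2}}\le\norm{W_2}\,\norm{D^{1/2}}$ and $\norm{D^{1/2}W_1}\le\norm{D^{1/2}}\,\norm{W_1}$. For a diagonal matrix, \eqref{eq:Dk_norm} gives $\norm{D^{1/2}}=\max_i\sigma'(a_{1,i})^{1/2}=\norm{D}^{1/2}$. Multiplying the two estimates therefore yields $\norm{W_2}\,\norm{D}\,\norm{W_1}$.

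Here I interpret $\delta(x):=\norm{D(x)}=\max_i|\sigma'(a_{1,i}(x))|$, which is the quantity used in Illustration~A. On a saturated region this is at most $\delta$, so the right-hand side is exactly the unrefined bound \eqref{eq:jac_bound} for $L=2$, and the refined bound is never worse than the original.

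No step is a real obstacle. The only points needing care are the positivity of $\sigma'$, which is needed for $D^{1/2}$ to be real, and the identity $\norm{D^{1/2}}^2=\norm{D}$ for positive diagonal $D$, which follows directly from \eqref{eq:Dk_norm}.
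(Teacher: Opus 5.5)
Your proposal is correct and matches the paper's proof step for step. You factor $Df_\theta(x)=(W_2D^{1/2})(D^{1/2}W_1)$, apply submultiplicativity, and then split each factor again using $\norm{D(x)^{1/2}}=\delta(x)^{1/2}$; your explicit reading $\delta(x)=\norm{D(x)}=\max_i\sigma'(a_{1,i}(x))$ is exactly what the paper uses implicitly.
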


\begin{proof}
Write $Df_\theta(x) = (W_2\,D(x)^{1/2})(D(x)^{1/2}\,W_1)$
and apply submultiplicativity.
For the second inequality, apply submultiplicativity once more
to each factor and use $\norm{D(x)^{1/2}}=\delta(x)^{1/2}$.
\end{proof}

\begin{remark}[When is the improvement strict?]\label{rmk:when}
Equality in~\eqref{eq:single} holds if and only if
the top right singular vector of $W_2$
coincides with the coordinate direction of $\max_j\sech(a_j)$,
and similarly for the top left singular vector of $W_1$.
Generically, this alignment does not hold,
and the inequality is strict.
The improvement is greatest when the diagonal entries of~$D(x)$
are heterogeneous:
some neurons near zero pre-activation ($\sigma'\approx\sigma'_{\max}$)
while others are deep in saturation ($\sigma'\approx 0$).
\end{remark}

\begin{definition}[Doubly-weighted spectral norm]\label{def:doubly}
For $A\in\R^{m\times n}$ and
positive-definite diagonal matrices $\Delta_L,\Delta_R$,
define
$\norm{A}_{\Delta_L,\Delta_R}
:=\norm{\Delta_L^{1/2}\,A\,\Delta_R^{1/2}}$.
The interior factors in~\eqref{eq:multi} are
$\norm{W_k}_{D_k,\,D_{k-1}}$.
\end{definition}

\begin{theorem}[Refined flow Lipschitz bound]\label{thm:flow}
For initial conditions $h_0,h_0'\in U$ and as long as both
trajectories remain in~$U$,
\begin{equation}\label{eq:flow}
  \norm{\Phi^\theta_T(h_0)-\Phi^\theta_T(h_0')}
  \;\le\;
  e^{\widetilde{C}(U)\,T}\,\norm{h_0-h_0'}
  \;\le\;
  e^{C(U)\,T}\,\norm{h_0-h_0'}.
\end{equation}
\end{theorem}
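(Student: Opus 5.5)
The plan is to run the same Gr\"onwall argument as in Lemma~\ref{lem:mvt} and Theorem~\ref{thm:main}(ii), but with the refined constant $\widetilde{C}(U)$ of Theorem~\ref{thm:multi} in place of $C(U)$. Set $h(t)=\Phi^\theta_t(h_0)$, $h'(t)=\Phi^\theta_t(h_0')$ and $e(t):=h(t)-h'(t)$. By the standing assumption, both trajectories stay in $U$ on $[0,T]$. Since $U$ is convex, the segment $\ell_t(s):=h'(t)+s\,e(t)$, $s\in[0,1]$, lies in $U$ for every $t$. The fundamental theorem of calculus then gives
\[
  \dot e(t)=f_\theta(h(t))-f_\theta(h'(t))
  =\Bigl(\int_0^1 Df_\theta(\ell_t(s))\,ds\Bigr)e(t),
\]
exactly as in the proof of Corollary~\ref{cor:lipschitz}.

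Next, I would invoke Theorem~\ref{thm:multi}(ii), i.e.\ the first inequality in~\eqref{eq:chain}. It gives $\norm{Df_\theta(\xi)}\le\widetilde{C}(U)$ pointwise for every $\xi\in U$, and this applies in particular to every point $\ell_t(s)$. Hence $\norm{\dot e(t)}\le\widetilde{C}(U)\norm{e(t)}$.

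To convert this into a differential inequality for $y(t):=\norm{e(t)}$, I would note that $y$ is Lipschitz, hence absolutely continuous. At points where $y>0$ we have $y'=\langle e,\dot e\rangle/\norm{e}\le\norm{\dot e}\le\widetilde{C}(U)\,y$. The set where $e(t)=0$ needs no estimate: by uniqueness of solutions, $e$ vanishes there identically and the bound is trivial. Lemma~\ref{lem:gronwall} with $\alpha\equiv\widetilde{C}(U)$ then yields $\norm{e(T)}\le e^{\widetilde{C}(U)T}\norm{h_0-h_0'}$, which is the first inequality in~\eqref{eq:flow}. One could alternatively apply Lemma~\ref{lem:mvt} directly and bound $\sup_{\xi\in U}\norm{Df_\theta(\xi)}$ by $\widetilde{C}(U)$ inside its integrand.

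The second inequality follows from the second inequality in~\eqref{eq:chain}, $\widetilde{C}(U)\le C(U)$, together with the monotonicity of $c\mapsto e^{cT}$ for $T\ge0$.

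The only point that needs care is the use of $\widetilde{C}(U)$ as a uniform bound. Its definition~\eqref{eq:Ctilde} takes a supremum over $U$ and requires $\sigma'>0$, so that $D_k^{1/2}$ exists. Convexity of $U$ is what guarantees that the segments $\ell_t$ never leave $U$. Both conditions are built into the hypotheses inherited from Theorem~\ref{thm:multi}, so no further obstacle is expected.
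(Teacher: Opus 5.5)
Your proposal is correct and is essentially the argument the paper intends; the paper gives no separate proof, but the bound follows by substituting $\sup_{\xi\in U}\norm{Df_\theta(\xi)}\le\widetilde{C}(U)$ from Theorem~\ref{thm:multi}(ii) into the Gr\"onwall estimate of Lemma~\ref{lem:mvt}, then using $\widetilde{C}(U)\le C(U)$ from~\eqref{eq:chain}. Your convex-segment formula for $\dot e$ and the Gr\"onwall step for $\norm{e(t)}$ just spell out Lemma~\ref{lem:mvt}, and your handling of the case $e(t)=0$ via uniqueness is fine.
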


\begin{corollary}[Exponential amplification of the improvement]
\label{cor:exp_amp}
Define the improvement ratio
$\rho:=C(U)/\widetilde{C}(U)\ge 1$.
The ratio of the original to the refined flow bound satisfies
\begin{equation}\label{eq:exp_amp}
  \frac{e^{C(U)\,T}}{e^{\widetilde{C}(U)\,T}}
  \;=\;
  e^{(\rho-1)\,\widetilde{C}(U)\,T}
  \;\xrightarrow{T\to\infty}\;\infty
  \quad\text{whenever $\rho>1$.}
\end{equation}
\end{corollary}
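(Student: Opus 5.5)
The identity in \eqref{eq:exp_amp} is pure algebra once the definition of $\rho$ is unpacked. I would therefore split the argument into three steps: an algebraic identity, a sign check on the exponent, and the limit $T\to\infty$.

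\textbf{Step 1 (identity).} Assume $\widetilde{C}(U)>0$, so that $\rho$ is well defined. By definition, $C(U)=\rho\,\widetilde{C}(U)$. Hence
\[
  C(U)\,T-\widetilde{C}(U)\,T=(\rho-1)\,\widetilde{C}(U)\,T .
\]
Dividing exponentials gives $e^{C(U)T}/e^{\widetilde{C}(U)T}=e^{(\rho-1)\widetilde{C}(U)T}$. The fact that $\rho\ge 1$ is exactly the second inequality in \eqref{eq:chain} of Theorem~\ref{thm:multi}(ii). This also shows that the ratio is at least $1$, consistent with the two flow bounds in \eqref{eq:flow} of Theorem~\ref{thm:flow}.

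\textbf{Step 2 (the exponent is positive).} When $\rho>1$, the coefficient $(\rho-1)\widetilde{C}(U)$ is strictly positive, since $\widetilde{C}(U)>0$. The exponent $(\rho-1)\widetilde{C}(U)T$ is therefore linear in $T$ with positive slope.

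\textbf{Step 3 (limit).} A linear function of $T$ with positive slope tends to $+\infty$ as $T\to\infty$, and so does its exponential. This is the claimed divergence.

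\textbf{Main obstacle (degenerate case).} The only delicate point is the degenerate case $\widetilde{C}(U)=0$, where $\rho$ is undefined. This happens, for instance, if some $W_\ell=0$; then $Df_\theta\equiv 0$ on $U$.
\begin{itemize}
\item If $C(U)=0$ as well, both flow bounds equal $1$. The corollary is vacuous, and I would adopt the convention $\rho=1$.
\item If $C(U)>0$, the ratio $e^{C(U)T}$ still diverges. The formula \eqref{eq:exp_amp} must then be read with $(\rho-1)\widetilde{C}(U)$ interpreted as $C(U)-\widetilde{C}(U)$.
\end{itemize}
I would state the identity in the form
\[
  e^{(C(U)-\widetilde{C}(U))\,T}
\]
to cover both cases, and note that it coincides with \eqref{eq:exp_amp} whenever $\widetilde{C}(U)>0$.
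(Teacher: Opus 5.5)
Your argument is correct and is essentially the paper's own. The paper gives no separate proof: the identity is just $C(U)=\rho\,\widetilde{C}(U)$ substituted into the exponent, with $\rho\ge 1$ supplied by Theorem~\ref{thm:multi}(ii), and the divergence follows from the positive slope $(\rho-1)\widetilde{C}(U)=C(U)-\widetilde{C}(U)>0$.

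One refinement of your degenerate-case discussion: your second sub-case ($\widetilde{C}(U)=0<C(U)$) cannot occur. Under the standing hypothesis $\sigma'>0$, every $D_k(x)^{1/2}$ is invertible, so $\widetilde{C}(U)=0$ forces some $W_\ell=0$, hence $C_W=0$ and $C(U)=0$. Only the trivial case $\widetilde{C}(U)=C(U)=0$ remains.
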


\begin{corollary}[Refined Floquet obstruction]\label{cor:floquet_refined}
Under the hypotheses of Theorem~\ref{thm:liouville},
\begin{equation}\label{eq:floquet_refined}
  e^{-d\,\widetilde{C}(U)\,T}
  \;\le\;
  \det M_\gamma
  \;\le\;
  e^{+d\,\widetilde{C}(U)\,T},
\end{equation}
where $\widetilde{C}(U)\le C(U)$ by Theorem~\textup{\ref{thm:multi}(ii)}.
\end{corollary}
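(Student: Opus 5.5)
The plan is to repeat the proof of Theorem~\ref{thm:liouville}(i), replacing the uniform bound $C(U)$ from Theorem~\ref{thm:main}(i) with the refined bound $\widetilde{C}(U)$ from Theorem~\ref{thm:multi}(ii).

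First, Lemma~\ref{lem:liouville} gives
\[
\det M_\gamma=\exp\Bigl(\int_0^T\Tr(Df_\theta(\gamma(t)))\,dt\Bigr).
\]
In particular $\det M_\gamma>0$, so $\ln\det M_\gamma$ is well defined, and
\[
\ln\det M_\gamma=\int_0^T\Tr(Df_\theta(\gamma(t)))\,dt.
\]
The two-sided inequality \eqref{eq:floquet_refined} is then equivalent to
\[
|\ln\det M_\gamma|\le d\,\widetilde{C}(U)\,T,
\]
and I will prove this form.

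Second, I bound the integrand pointwise along the orbit. For each $t\in[0,T]$ we have $\gamma(t)\in U$. The trace bound, Lemma~\ref{lem:trace}, gives
\[
|\Tr(Df_\theta(\gamma(t)))|\le d\,\norm{Df_\theta(\gamma(t))}.
\]
The first inequality in \eqref{eq:chain} gives $\norm{Df_\theta(\gamma(t))}\le\widetilde{C}(U)$, since $\widetilde{C}(U)$ is a supremum over $U$ of the pointwise bound \eqref{eq:multi}.

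Third, I integrate over $[0,T]$ using the triangle inequality for integrals, which gives $|\ln\det M_\gamma|\le d\,\widetilde{C}(U)\,T$. Exponentiating yields \eqref{eq:floquet_refined}. The comparison $\widetilde{C}(U)\le C(U)$ is quoted directly from Theorem~\ref{thm:multi}(ii), so this corollary is indeed a sharpening of Theorem~\ref{thm:liouville}(i).

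There is no genuine obstacle here. The one point to check is that Theorem~\ref{thm:multi} is actually applicable. It needs $\sigma'>0$ so that $D_k^{1/2}$ exists, which I take as implicit in the definition of $\widetilde{C}(U)$. It also needs the supremum defining $\widetilde{C}(U)$ to be finite so that the integrand bound makes sense. Finiteness holds because each factor is bounded by $\norm{W_\ell}\,\Lambda_\sigma^{1/2}$-type quantities, or more simply because $\widetilde{C}(U)\le C(U)<\infty$. Continuity of $t\mapsto\Tr(Df_\theta(\gamma(t)))$, which follows from $\sigma\in C^1$, makes the integral well defined.
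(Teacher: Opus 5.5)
Your proposal is correct and is exactly the intended argument: the paper gives no separate proof, since the corollary is just the proof of Theorem~\ref{thm:liouville}(i) with the pointwise bound $\norm{Df_\theta(\gamma(t))}\le\widetilde{C}(U)$ from Theorem~\ref{thm:multi} substituted for Theorem~\ref{thm:main}(i). You are also right that $\sigma'>0$ must be tacitly added to the hypotheses of Theorem~\ref{thm:liouville} for $\widetilde{C}(U)$ to be defined.
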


\section{Additional Numerical Experiments}
\label{app:numerics}

\paragraph{Illustration~B: Floquet--Liouville obstruction
(Theorem~\ref{thm:liouville}).}
The obstruction requires pre-activations bounded away from zero
(Figure~\ref{fig:exp_B}).
With small biases, pre-activations cross zero on the orbit,
keeping $\sech^2(0)=1$ and $C(U)$ non-decaying.
We shift $b_1\mapsto b_1+c\cdot\mathbf{1}$
with $c\ge c_{\min}:=\max_j\norm{W_{1j\cdot}}_2$.
Then $a_j(t)\ge c-\norm{W_{1j\cdot}}_2>0$
for all~$t$, eliminating zero crossings.

\begin{figure}[ht]
  \centering
  \includegraphics[width=0.72\linewidth]{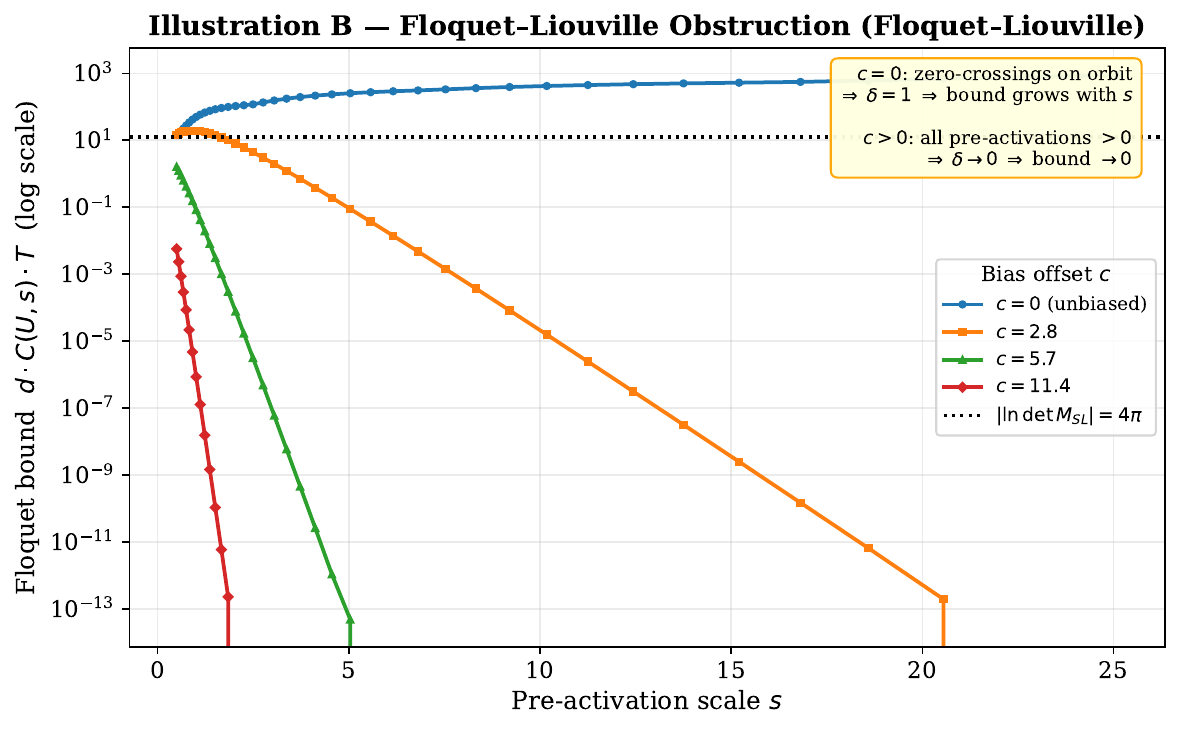}
  \caption{%
    \textbf{Floquet--Liouville obstruction (Theorem~\ref{thm:liouville}).}
    The obstruction bound $d\,C(U;\,s)\,T$ on $|\ln\det M_\gamma|$ as
    a function of pre-activation scale~$s$, for four bias-offset
    values $c\in\{0,\,1.5c_{\min},\,3c_{\min},\,6c_{\min}\}$
    where $c_{\min}=\max_j\norm{W_{1j\cdot}}_2$.
    Dotted line: the exact Stuart--Landau value
    $|\ln\det M_{\mathrm{SL}}|=4\pi$.
    The unbiased case ($c=0$, top curve) \emph{grows} with $s$ because
    zero crossings keep $\delta=1$.
    Each positive offset eliminates zero crossings; with $c=6c_{\min}$
    the bound falls below $10^{-2}$ at moderate~$s$.}
  \label{fig:exp_B}
\end{figure}

\paragraph{Illustration~C: Full protocol.}
\label{par:exp_C_protocol}
The bias-shifted experiment (Figure~\ref{fig:exp_C}) imposes
three constraints:
(1)~A non-trainable bias offset $c=2.5$ is added to every
pre-activation during training, so the network learns to
approximate Stuart--Landau \emph{in the presence of} the offset.
(2)~The bias vector $b_1$ is frozen at zero throughout training,
preventing the optimiser from undoing the shift.
(3)~The row norms of $W_1$ are projected after each step to satisfy
$\lVert W_{1,j\cdot}\rVert_2 \le c - 0.5 = 2.0$.
After training the offset is absorbed: $b_1\leftarrow b_1+c$.
By the triangle inequality, for every $h$ on the unit circle and
every hidden unit~$j$,
$a_j(h) = W_{1,j\cdot}h + b_{1,j}
\ge c - \lVert W_{1,j\cdot}\rVert_2
\ge 0.5 > 0$,
so all pre-activations are strictly positive on the orbit.

\paragraph{Illustration~D: Liouville--Abel--Jacobi identity.}
Table~\ref{tab:exp_D} verifies the identity
$\ln\det M_\gamma = \int_0^T \Tr\bigl(Df_{\mathrm{SL}}(\gamma(t))\bigr)\,dt$
on the Stuart--Landau system.

\begin{table}[ht]
  \centering
  \caption{%
    \textbf{LAJ identity for the Stuart--Landau oscillator.}
    Numerical integration vs.\ exact values; all quantities computed on
    1000 equispaced points of the unit-circle orbit.
    The last row verifies the bound of
    Theorem~\ref{thm:liouville}\textup{(i)}:
    $d\,C(U)\,T=30.34\ge 4\pi=12.57=|\ln\det M_\gamma|$.}
  \label{tab:exp_D}
  \begin{tabular}{lcc}
    \hline
    Quantity & Numerical & Exact \\
    \hline
    $\Tr(Df_{\mathrm{SL}}(\gamma(t)))$
      & $-2.00000$ & $-2$ \\
    $\int_0^{2\pi}\Tr(Df_{\mathrm{SL}})\,dt$
      & $-12.56637$ & $-4\pi$ \\
    $\det M_\gamma = e^{\int\Tr}$
      & $3.4873\times10^{-6}$ & $e^{-4\pi}$ \\
    $|\ln\det M_\gamma|$
      & $12.566$ & $4\pi=12.566$ \\[2pt]
    \hline\noalign{\vskip 2pt}
    Thm.~\ref{thm:liouville} bound $d\,C(U)\,T$
      & $30.34$ & $\ge 4\pi\;\checkmark$ \\
    \hline
  \end{tabular}
\end{table}

{\tolerance=800\emergencystretch=1.5em
\paragraph{Illustration~F: Individual Floquet multiplier bounds
(Theorem~\ref{thm:individual}).}
Using the same bias-shifted $256$-unit MLP from Illustration~C,
we solve the variational equation
$\dot\Psi=Df_\theta(\gamma(t);s)\,\Psi$, $\Psi(0)=I$, along the
unit-circle orbit for each~$s$, obtaining the transition matrix
$\Psi(T)$.
Note: the reference orbit $\gamma$ (the unit circle of the
Stuart--Landau system) is not generally a $T$-periodic orbit of
the \emph{learned} vector field $f_\theta$; consequently $\Psi(T)$
is the transition matrix of the linearised flow along the reference
curve, not the monodromy matrix in the strict Floquet-theory sense.
In particular, no trivial eigenvalue $\mu_0=1$ is enforced, which
accounts for the absence of a unit multiplier in
Table~\ref{tab:individual_multipliers}.
Nevertheless, the Gr\"onwall argument underlying
Theorem~\ref{thm:individual} applies to the transition matrix along
\emph{any} curve $\gamma\subset U$: the eigenvalue bounds
$e^{-C(U)T}\le|\mu_i|\le e^{C(U)T}$ remain valid for the
eigenvalues of $\Psi(T)$, and the illustration below is a
diagnostic verification of those bounds along the reference orbit.
Table~\ref{tab:individual_multipliers} reports the two eigenvalues
$\mu_1,\mu_2$ of $\Psi(T)$ and the bounds of
Theorem~\ref{thm:individual}:
$e^{-C(U)\,T}\le|\mu_i|\le e^{C(U)\,T}$.
The bounds hold in every case;
Figure~\ref{fig:individual_multipliers} visualises how the
allowed window narrows with increasing~$s$.
As $\delta\to 0$ the window
$(e^{-C(U)T},\,e^{C(U)T})$ collapses to $\{1\}$, squeezing
both eigenvalues toward unity and eliminating both strong
contraction and expansion.\par}

\begin{table}[ht]
  \centering
  \caption{%
    \textbf{Individual Floquet multiplier bounds
    (Theorem~\ref{thm:individual}).}
    Bias-shifted $256$-unit tanh MLP
    (Illustration~C protocol).
    The transition matrix $\Psi(T)$ along the
    reference Stuart--Landau orbit is computed via the variational
    equation for each~$s$; $\mu_1,\mu_2$ are its eigenvalues
    (no trivial $\mu_0=1$ is enforced since $\gamma$ is not a
    periodic orbit of $f_\theta$).
    As $\delta\to 0$, both $|\mu_i|\to 1$.}
  \label{tab:individual_multipliers}
  \smallskip
  \begin{tabular}{@{}r c c c c c c@{}}
    \hline
    $s$ & $\delta$ & $C(U)$ & $|\mu_1|$ & $|\mu_2|$
        & $e^{-C(U)T}$ & $e^{C(U)T}$ \\
    \hline
    1 & 0.526 & 2.223
      & $2.30\times10^{-5}$ & $1.18$
      & $8.58\times10^{-7}$ & $1.17\times10^{6}$ \\
    2 & 0.128 & 0.776
      & $7.86\times10^{-2}$ & $7.86\times10^{-2}$
      & $7.64\times10^{-3}$ & $1.31\times10^{2}$ \\
    4 & 0.005 & 0.024
      & $0.951$ & $0.962$
      & $0.858$ & $1.165$ \\
    7 & ${<}10^{-4}$ & ${<}10^{-3}$
      & $1.000$ & $1.000$
      & $0.999$ & $1.001$ \\
    \hline
  \end{tabular}
\end{table}

\begin{figure}[ht]
  \centering
  \includegraphics[width=0.72\linewidth]{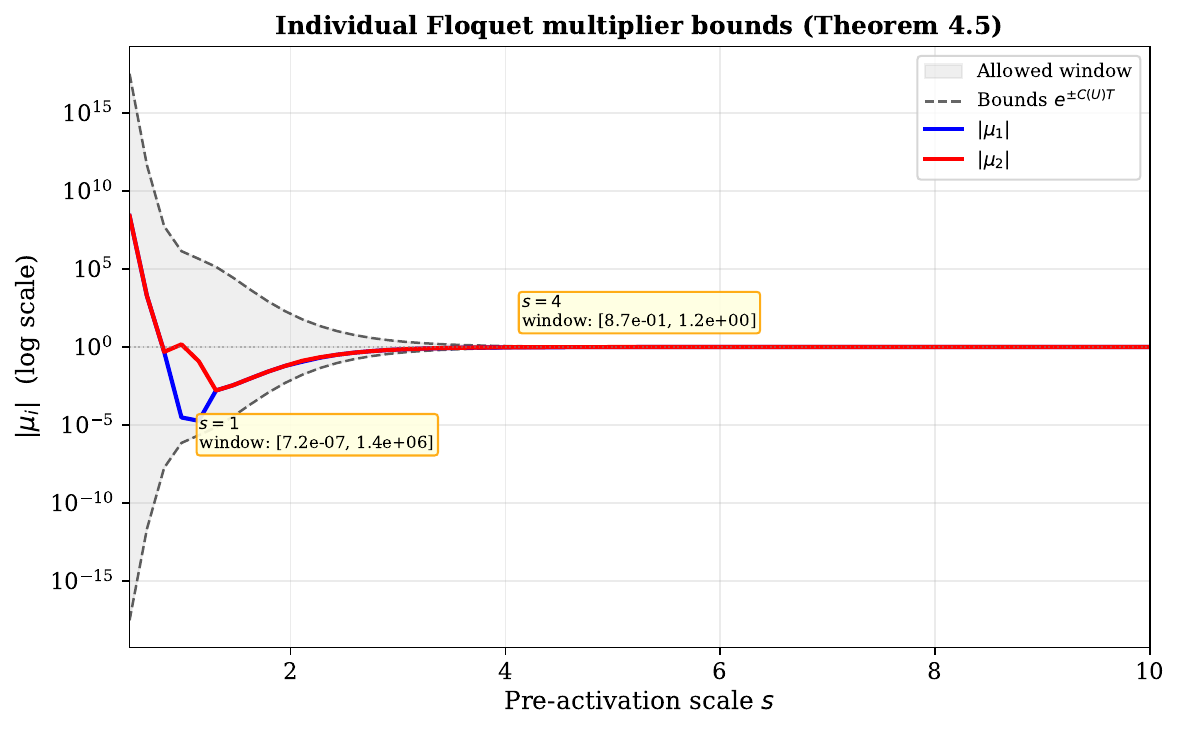}
  \caption{%
    \textbf{Individual Floquet multiplier bounds
    (Theorem~\ref{thm:individual}).}
    The grey band is the allowed window
    $[e^{-C(U)T},\,e^{C(U)T}]$; blue and red curves are
    $|\mu_1|,|\mu_2|$.
    As $s$ increases ($\delta\to 0$), the window collapses to
    $\{1\}$ and both multipliers converge to unity.}
  \label{fig:individual_multipliers}
\end{figure}

\paragraph{Illustration~E: Refined bound verification
(Theorems~\ref{thm:single}--\ref{thm:multi}).}
We verify the refined bound of
Section~\ref{sec:refined} on the same Stuart--Landau benchmark.
A $32$-unit $\tanh$ MLP is trained at $s=1$; weights are frozen
and $s$ is swept from~$1$ to~$30$.

\begin{table}[ht]
\centering
\caption{%
  \textbf{Bound comparison at $h^\star$
  (Illustration~E).}
  Ratio $=$ original\,/\,refined.
  The refined bound tracks the actual norm closely
  (within ${\sim}5\%$ at strong saturation),
  while the original bound overestimates by up to~$9\times$.}
\label{tab:exp_E}
\smallskip
\begin{tabular}{@{}r r r r r r@{}}
\hline
$s$ & Actual & Refined & Original & Ratio & $\delta$ \\
\hline
1.0  & 1.612  & 2.125  & 2.482  & 1.2$\times$ & 0.993 \\
4.2  & 3.325  & 3.913  & 9.342  & 2.4$\times$ & 0.885 \\
7.4  & 3.210  & 3.636  & 12.935 & 3.6$\times$ & 0.695 \\
10.7 & 2.564  & 2.846  & 13.164 & 4.6$\times$ & 0.494 \\
13.9 & 1.845  & 2.016  & 11.330 & 5.6$\times$ & 0.326 \\
17.1 & 1.252  & 1.350  & 8.796  & 6.5$\times$ & 0.206 \\
20.3 & 0.821  & 0.876  & 6.391  & 7.3$\times$ & 0.126 \\
23.6 & 0.527  & 0.557  & 4.443  & 8.0$\times$ & 0.075 \\
26.8 & 0.334  & 0.351  & 2.998  & 8.5$\times$ & 0.045 \\
30.0 & 0.210  & 0.219  & 1.980  & 9.0$\times$ & 0.026 \\
\hline
\end{tabular}
\end{table}

\begin{figure}[ht]
  \centering
  \includegraphics[width=\linewidth]{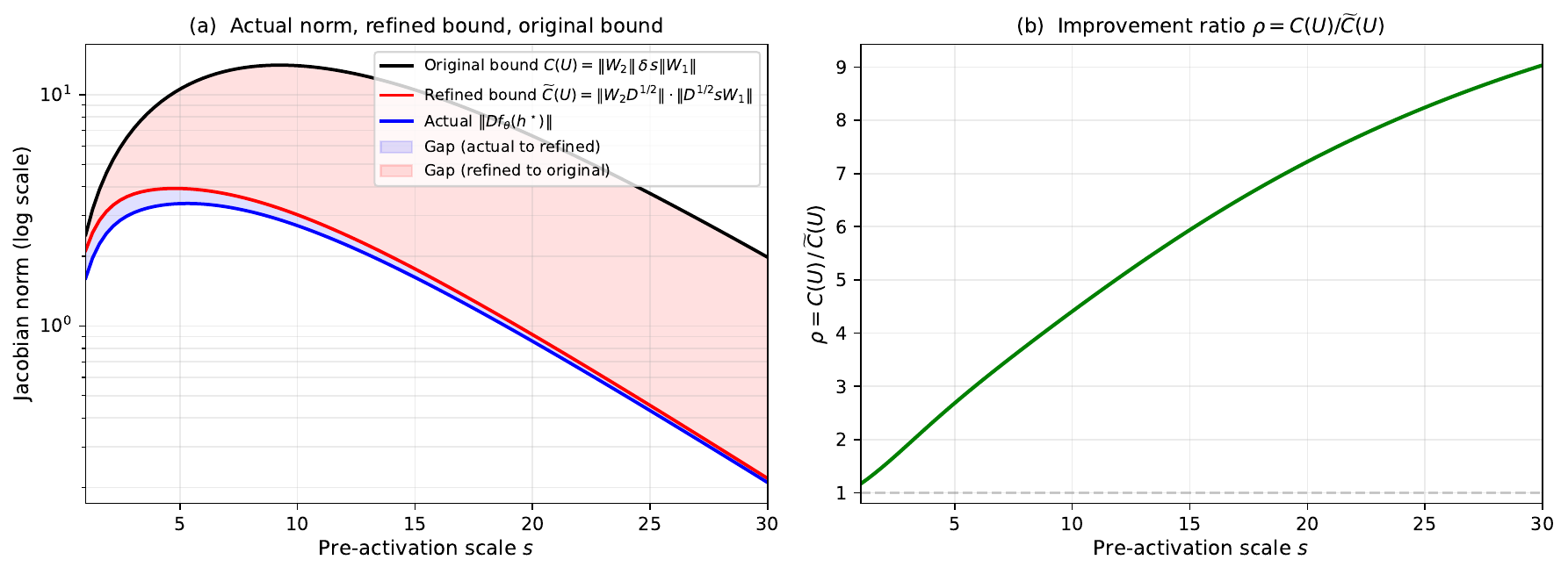}
  \caption{%
    \textbf{Refined bound verification (Illustration~E).}
    A $32$-unit $\tanh$ MLP is trained at $s{=}1$; weights are frozen
    and $s$ is swept from~$1$ to~$30$.
    \emph{(a)}~Actual Jacobian norm, refined bound,
    and original bound on a log scale.
    \emph{(b)}~Improvement ratio (original\,/\,refined)
    grows monotonically to~$9\times$ at $s=30$.}
  \label{fig:exp_E}
\end{figure}

Table~\ref{tab:exp_E} and Figure~\ref{fig:exp_E} confirm
$\text{actual}\le\text{refined}\le\text{original}$
throughout.
The improvement ratio reaches $9\times$ at $s=30$;
the refined-to-actual gap shrinks to ${\sim}4\%$ at strong saturation.

\clearpage
\bibliographystyle{plainnat}
\bibliography{refs}

\end{document}